\pgfplotsset{compat=1.18}
\titleformat{\subsubsection}[runin]
{\normalfont\normalsize\bfseries}{\thesubsubsection}{1em}{}
\numberwithin{equation}{section}
\newtheorem{thm}{Theorem}[section]
\newtheorem{theorem}[thm]{Theorem}
\newtheorem{lemma}[thm]{Lemma}
\crefname{corollary}{Corollary}{Corollaries}
\crefname{algorithm}{Algorithm}{Algorithms}
\crefname{figure}{Figure}{Figures}
\crefname{algocf}{Algorithm}{Algorithms}
\newcommand{\I}{\mathcal{I}}
\newcommand{\argmin}{\operatornamewithlimits{argmin}}
\newcommand{\R}{\mathbb{R}}
\newcommand{\gradB}{\nabla_{\!\!{\scriptscriptstyle B}}}
\newcommand{\Ucoef}{\rho}
\newcommand{\Wcoef}{\gamma}
\newcommand{\Mem}{\mathcal{H}}
\newcommand{\MemExpr}{\{(x_i, f_i, g_i, z_{i+1}, \tau_i, L_i, \Delta_i)\}_{i=n-\idxMem}^{n-1}}
\newcommand{\idxArgmin}{m}
\newcommand{\idxLastSuccess}{s}
\newcommand{\idxMem}{k}
\newcommand{\idxSetSuccess}{J}
\newcommand{\epsFunc}{\epsilon(\Ucoef, \Wcoef)}
\newcommand{\ones}{\mathbf{1}}
\newcommand{\idxEpoch}{\ell}
\newcommand{\epoch}[1]{{#1}^{(\idxEpoch)}}
\newcommand{\dotB}[2]{\langle {#1}, {#2} \rangle_B}
\newcommand{\dotp}[2]{\langle {#1}, {#2} \rangle}
\newcommand{\normB}[1]{\|{#1}\|_B}
\newcommand{\deltaExpr}{L_n \tau_\idxLastSuccess\left(\frac{1}{L_\idxLastSuccess^2} - \frac{1}{L_n^2}\right) \frac{1}{2}\normB{g_\idxLastSuccess}^2}
\newcommand{\indicator}[1]{1_{#1}}
\newcommand{\ip}[2]{\sum_{i={n-\idxMem}}^{n-1} {#1}_i {#2}_i}
\newcommand{\fullSum}[1]{\sum_{i={n-\idxMem}}^{n-1} {#1}_i}
\newcommand{\norm}[1]{\|{#1}\|}
\newcommand{\AlgSpacingA}{\vspace{2.7mm}}
\newcommand{\AlgSpacingB}{\vspace{0mm}}
\theoremstyle{definition}
\newtheorem{remark}{Remark}
\theoremstyle{plain}
\theoremstyle{definition}
\begin{document}
    
    \title{A Practical Adaptive Subgame Perfect Gradient Method}

    \author{Alan Luner\footnote{Johns Hopkins University, Department of Applied Mathematics and Statistics, \url{aluner1@jhu.edu}} \qquad Benjamin Grimmer\footnote{Johns Hopkins University, Department of Applied Mathematics and Statistics, \url{grimmer@jhu.edu}}}
    \date{}
    \maketitle
    \begin{abstract}
        We present a performant gradient method for smooth convex optimization, drawing inspiration from several recent advances in the field. Our algorithm, the Adaptive Subgame Perfect Gradient Method (ASPGM) is based on the notion of subgame perfection, attaining a dynamic strengthening of minimax optimality. At each iteration, ASPGM makes a momentum-type update, optimized dynamically based on a (limited) memory/bundle of past first-order information. ASPGM is linesearch-free, parameter-free, and adaptive due to its use of recently developed auto-conditioning, restarting, and preconditioning ideas. We show that ASPGM is competitive with state-of-the-art L-BFGS methods on a wide range of smooth convex problems. Unlike quasi-Newton methods, however, our core algorithm underlying ASPGM has strong, subgame perfect, non-asymptotic guarantees, providing certificates of solution quality, resulting in simple stopping criteria and restarting conditions.
    \end{abstract}

    \section{Introduction}
There is a common trade-off in optimization between strong convergence guarantees and practical speed. For first-order convex optimization, linesearch methods, in particular quasi-Newton methods, are well-established and popular algorithms. When measured by practical performance, L-BFGS \cite{LBFGS_Nocedal} and its variants (e.g., \cite{LBFGSB_Zhu,oLBFGS_Schraudloph}) are state-of-the-art for many smooth convex problems, often converging superlinearly. However, despite this empirical performance, these methods have rather limited non-asymptotic guarantees. At the other extreme, accelerated gradient methods with comprehensive convex optimization theory are often outperformed by these linesearch methods. For example, the Optimized Gradient Method (OGM)~\cite{OGM} is known to provide the minimax optimal convergence guarantee for smooth convex problems but can fail to converge faster than its worst-case rate even on very simple instances\footnote{See the example of minimizing $f(x)=\tfrac{1}{2}x^2$ in~\cite{SPGM} where OGM fails to accelerate beyond the minimax worst-case rate.}.

In light of this dichotomy, there has been a recent renewed focus in smooth optimization on building performant methods with explicit non-asymptotic convergence rates. In this work, we bring together two separate recent directions of this movement: (i) adaptive methods that learn problem parameters and conditioning while running and (ii) subgame perfect methods that optimally (in a strong game-theoretic sense) leverage the bundle of first-order information collected at runtime.

Regarding (i), good performance of first-order methods often relies on an estimate of the problem's level of smoothness and its overall conditioning. Estimating smoothness is often addressed by backtracking to ensure an estimated constant satisfies the needed conditions for progress at each iteration~\cite{FISTA_BeckTeboulle,UFGM_Nesterov}. Recently, however, many works~\cite{AdaNAG_Suh,AdGD_Malitsky,AdGD_Prox2_Malitsky,ACFGM_Li,NAGFree_Cavalcanti,AdaptiveProximalLocalLipschitz_Latafat,AdaptivePrimalDual_Vladarean,AdaptiveAltMinimization_Latafat,LinesearchFreeNonconvex_Yagishita,AdaptiveUniversal_Oikonomidis,AutoconditionedProjected_LanLiXu} have produced linesearch-free methods with strong supporting convergence theory, capable of estimating problem smoothness while avoiding the cost of backtracking. Problem conditioning can be addressed through careful selection of a preconditioner to improve the problem's overall condition number. BFGS-type methods could be viewed as providing such a live preconditioner through their maintenance of a second-order approximation. Recently, an alternative online preconditioner was developed by~\cite{OnlineScaling_Gao}, extending the ideas of \cite{MultidimensionalBacktracking_Kunstner}, in the framework of online learning to optimize performance.

Regarding (ii), the performance of first-order methods has long been known to improve if methods are allowed to maintain an additional memory. Bundle methods \cite{Bundle_Kiwiel,BundleProximity_Kiwiel,VariantsOfBundleMethods,BundleLevel_Lan} have a long history of success, maintaining a bundle of many past first-order evaluations for use in its iterative update. The works \cite{OGMM_Florea,GMM_Nesterov}, among others, introduce several ``gradient methods with memory'', using a memory bundle to improve the progress of their estimate sequence and construct strategic, tight smooth lower bounds. Recently, the subgame perfect approach of~\cite{SPGM} provided a theoretical framework describing the optimal dynamic usage of memory in a gradient method, strengthening the typical standard of minimax optimality. Their resulting Subgame Perfect Gradient Method (SPGM) solves a planning subproblem at each iteration to optimally leverage a bundle of past information---although to do so optimally requires exact knowledge of the given convex problem's smoothness constant.

This work provides a new performant method, which we call the Adaptive Subgame Perfect Gradient Method (ASPGM), combining the benefits of adaptive linesearch-free methods, preconditioning, and a subgame perfect usage of memory.

\subsection{Related Literature}

\paragraph{Optimized Gradient Method (OGM)}
First proposed numerically by Drori and Teboulle \cite{FirstPEP} and later formalized by Kim and Fessler \cite{OGM}, OGM possesses an optimal $O(1/N^2)$ convergence guarantee for smooth convex minimization. This is the same order of convergence as Nesterov's classic Fast Gradient Method~\cite{Nesterov} but improves asymptotically by a constant factor of two. Drori~\cite{Drori_OGMOptimal} showed that OGM's convergence guarantee is exactly minimax optimal among all first-order methods. That is, among all gradient-span methods $A\in\mathcal{A}$ producing a final iterate $x_N$, OGM attains the minimum value of
\begin{equation}\label{eq:minimax-optimality}
    \min_{A\in \mathcal{A}} \max_{(f,x_0)\in\mathcal{F}_L} \frac{f(x_N)-f(x_\star)}{\frac{L}{2}\|x_0-x_\star\|^2}
\end{equation}
where $\mathcal{F}_L$ denotes the family of all $L$-smooth convex problems with a minimizer $x_\star$ and initialization $x_0$. Note that OGM is not adaptive or parameter-free as it relies on knowledge of the smoothness constant $L$. OGM and its theory are formally introduced in Section~\ref{SubSec:OGM}.

\paragraph{Subgame Perfect Gradient Method (SPGM)}
SPGM~\cite{SPGM} improves OGM to attain a stronger notion of optimality than minimax optimality in the sense of~\eqref{eq:minimax-optimality} called subgame perfection. Consider a gradient method that sees iterates and first-order information of $\{(x_i,f_i,g_i)\}_{i=0}^N$ where $f_i=f(x_i)$ and $g_i=\nabla f(x_i)$. At any iteration $n\in\{1,\dots, N\}$ having seen a history of $\mathcal{H}=\{(x_i,f_i,g_i)\}^{n-1}_{i=0}$, denote the subclass of $L$-smooth convex problems agreeing with all observed first-order information by $\mathcal{F}^\mathcal{H}_L$ and the subclass of gradient-span methods required to reproduce $x_0,\dots,x_{n-1}$ by $\mathcal{A}^\mathcal{H}$. Then we say an algorithm is subgame perfect if for any iteration $n$ and observed history $\mathcal{H}$, the method attains the minimum value of
\begin{equation}\label{eq:subgame-perfect}
    \min_{A\in \mathcal{A}^\mathcal{H}} \max_{(f,x_0)\in\mathcal{F}_L^\mathcal{H}} \frac{f(x_N)-f(x_\star)}{\frac{L}{2}\|x_0-x_\star\|^2}.
\end{equation}
SPGM, formally introduced in Section~\ref{SubSec:SPGM}, attains this heightened standard. At each iteration, it solves a simple low-dimensional second-order cone program (SOCP) to compute an optimal next step. This update rule is optimal with respect to the stored first-order information in memory. Numerically, \cite{SPGM} observed that when applied to a non-adversarial problem instance, SPGM greatly improves upon the performance of non-dynamic methods like OGM. However, like OGM, SPGM is not adaptive as it remains dependent on a smoothness parameter as input.

\paragraph{BFGS/L-BFGS}
Quasi-Newton methods \cite{BFGS-B,BFGS-F,BFGS-G,BFGS-S,LBFGS_Nocedal,LBFGSB_Zhu,oLBFGS_Schraudloph} attempt to find a minimizer for a function by applying Newton's method with an approximation of the inverse Hessian. At each iteration $n$, one performs a low-rank update to the inverse Hessian approximation, $B_n$, and a search direction is computed as $v = -B_n \nabla f(x_n)$. Once a descent direction is found, a linesearch method is applied to determine a stepsize and update the current iterate. Common options for linesearch methods include Armijo backtracking \cite{Armijo,NocedalWright}, Strong-Wolfe conditions \cite{Wolfe,NocedalWright}, and a modified robust method introduced by Hager and Zhang in \cite{HagerZhang}. In L-BFGS \cite{LBFGS_Nocedal}, instead of storing the full matrix $B_n$, one stores only a limited memory of past data. This allows the matrix update and search direction calculation to be performed using only vector-vector products, avoiding the storage of dense matrices. While L-BFGS performs extremely well in practice and often has (super)linear convergence in a neighborhood of the solution, it only provides limited non-asymptotic guarantees on solution quality\footnote{Although recent progress~\cite{Jin_BFGS,Rodomanov_BFGS,Jin_BFGS_SelfConcordant} has been made establishing non-asymptotic guarantees for standard BFGS with additional assumptions on the objective $f$.}.

\paragraph{Preconditioning}
Preconditioning can be viewed as selecting an inner product via a preconditioner $B$ to reduce the problem's difficulty, with respect to some measure. In smooth convex problems, a good preconditioner would reduce $ \frac{L_B}{2}\|x_0-x_\star\|_B^2$ as this controls convergence (where $L_B$ is a global smoothness constant of $f$ with respect to a modified inner product $\langle\cdot,\cdot\rangle_B$). In smooth strongly convex problems, one would like a preconditioner $B$ minimizing the condition number $L_B/\mu_B$. For $f\in C^2$, selecting $B$ as the inverse of $\nabla^2 f(x_\star)$ drives this condition number to one in a neighborhood of $x_\star$. One can view quasi-Newton methods then as gradient descent with its preconditioner being adjusted at runtime to roughly approximate this inverse Hessian. The work \cite{MultidimensionalBacktracking_Kunstner} introduces a multidimensional backtracking technique to update a diagonal preconditioner at runtime. As a modern advancement of this idea, the Online Gradient Scaling Method of~\cite{OnlineScaling_Gao,OSGMBest} uses the online learning regret bound toolbox to adjust the preconditioner at runtime. Asymptotically, this ensures an optimal preconditioned performance measure for gradient descent.

\paragraph{Adaptive Methods}
Adaptive methods offer strategies to address the issue of unknown function parameters. Backtracking is perhaps the most common and simple approach for smooth convex optimization when the smoothness parameter $L$ is unknown. At each iteration, the algorithm evaluates the current estimate of $L$, and backtracks ($L \leftarrow 2L$) if certain smoothness inequalities are not satisfied. In \cite{FISTA_BeckTeboulle}, Beck and Teboulle extended the accelerated rates of Nesterov's Fast Gradient Method \cite{Nesterov} to a backtracking linesearch setting. This approach was subsequently applied to broader settings, including \cite{UFGM_Nesterov} proposing a more general universal backtracking method for the broader class of functions with H\"{o}lder-continuous gradient. In \cite{OBL}, Park and Ryu propose the Optimal Backtracking Linesearch (OBL) method, which they show is minimax optimal against a certain generalized class of smooth convex gradient oracles.

While effective, a downside of backtracking is that it can be ``wasteful'' with respect to oracle evaluations. When a backtrack occurs, the new oracle information computed at the candidate iterate is effectively discarded.
Linesearch-free methods manage to avoid this overhead through various techniques. In bundle methods \cite{Bundle_Kiwiel,BundleProximity_Kiwiel, VariantsOfBundleMethods,BundleLevel_Lan}, this is handled with a ``null step'' in which the function value and gradient information are still saved in the bundle to inform future iterations. Starting with the introduction of adaptive gradient descent by~\cite{AdGD_Malitsky}, many recent works~\cite{AdaNAG_Suh,AdGD_Prox2_Malitsky,ACFGM_Li,AdaptiveProximalLocalLipschitz_Latafat,LinesearchFreeNonconvex_Yagishita,AdaptiveUniversal_Oikonomidis,AutoconditionedProjected_LanLiXu} adaptively learn the smoothness constant without backtracking, while also providing a non-asymptotic convergence guarantee. The task of designing methods that estimate an unknown level of strong convexity is typically much harder than adapting to smoothness. Recently, \cite{NAGFree_Cavalcanti} introduced a method for adaptively learning strong convexity constants while guaranteeing (suboptimal) linear global convergence. Our ASPGM will employ a similar estimate as a heuristic to benefit from strong convexity.

\subsection{Our Contributions}

This work provides a new performant gradient method bringing together the benefits from adaptive linesearch-free methods, preconditioning, and subgame perfect usage of memory. Our three main contributions are:
\begin{itemize}
    \item {\bf A Backtracking-free Subgame Perfect Gradient Method (BSPGM).} We first design a core parameter-free algorithm, BSPGM, that is subgame perfect against an appropriate class of first-order oracles. Although friendly to backtracking, BSPGM is linesearch-free; instead, it uses an efficient null-step, auto-conditioning approach, while providing dynamic, non-asymptotic convergence guarantees.
    \item {\bf An Adaptive Subgame Perfect Gradient Method (ASPGM).} The strong non-asymptotic convergence guarantees and certificates resulting from BSPGM's subgame perfection provide a robust restarting condition for the algorithm, ensuring a contraction under strong convexity. Combining this restarting with preconditioning gives our full algorithm, ASPGM.
    \item {\bf A Numerical Survey.} We conduct a numerical survey comparing BSPGM and ASPGM with a range of existing first-order methods from the literature across a range of smooth convex optimization problems. Our methods consistently outperform existing adaptive methods with strong theoretical support and perform comparably to state-of-the-art L-BFGS methods that lack such theory. These findings hold whether performance is measured in terms of oracle complexity or wall clock time. 
\end{itemize}

\paragraph{Outline}
\cref{Sec:Prelim} introduces key concepts for building our algorithm and theory. \cref{Sec:Algorithm} gradually presents the full algorithm and its various elements, while demonstrating convergence theory and the subgame perfection of the core procedure underlying ASPGM. \cref{Sec:Numerics} presents numerical experiments, comparing with a wide range of algorithms. Finally, \cref{Sec:Theory} contains deferred proofs of our theoretical results.

    \section{Preliminaries} \label{Sec:Prelim}

We consider unconstrained minimization problems of the form
\begin{equation} \label{Eqn:MinimizeF}
    \min_{x \in \R^d} f(x)
\end{equation}
where $f$ is convex and differentiable with gradient $\nabla f$ locally Lipschitz and with a minimizer $x_\star$. We highlight the fact that a global Lipschitz constant $L$ on the gradient need not be known, or even exist, for our algorithm to be well-defined and progress. We denote $I$ as the identity matrix, $e_i$ as the $i$th standard basis vector, and $\ones$ as the all ones vector, where the dimension of each should be clear from context.

Throughout, we will use $\langle \cdot, \cdot \rangle$ to denote the standard Euclidean inner product and $\|\cdot \|$ its corresponding norm. However, we will often use the modified inner products and norms defined by
\begin{equation}
    \dotB{x}{y} = \langle x, B^{-1} y \rangle \qquad \qquad \qquad \normB{x} = \sqrt{\dotB{x}{x}}
\end{equation}
for a given symmetric positive definite matrix $B$. Accordingly, we will use $\gradB f(x)$ to denote the gradient of $f$ at $x$, under the inner product $\langle \cdot, \cdot \rangle_B$. One can verify that $\gradB f(x) = B \nabla f(x)$. 
Our nonstandard choice of notation will become relevant when we apply preconditioning to our problem in \cref{SubSec:Precon}. However, for much of our analysis, one can assume that $B=I$, and the standard Euclidean forms are recovered.

We recall two key inequalities for smooth convex functions. First, we have
\begin{equation}\label{Eqn:ConvexityIneq}
    f(y) \geq f(x) + \dotB{\gradB f(x)}{y-x} \qquad \forall x,y
\end{equation}
which we will refer to as the \textit{convexity inequality}. Second, if $f$ has locally $L$-Lipschitz gradient near $x$, often referred to as local $L$-smoothness, we have
\begin{equation}\label{Eqn:CocoercivityIneq}
    f(y) \geq f(x) + \dotB{\gradB f(x)}{y-x} + \frac{1}{2L} \normB{\gradB f(x) - \gradB f(y)}^2 \qquad \forall y \text{ s.t. } \normB{x-y} < R
\end{equation}
which we will refer to as the \textit{cocoercivity inequality}. We also define the function 
\begin{equation}
    \tilde{L}_B(x,y) = \frac{\frac{1}{2} \normB{\gradB f(x) - \gradB f(y)}^2}{f(y) - f(x) - \dotB{\gradB f(x)}{y-x} }
\end{equation}
which returns the smallest $L$ such that \eqref{Eqn:CocoercivityIneq} holds from $x$ to $y$. We use the convention that $0/0 = 0$ to resolve the edge case where both numerator and denominator are zero, as any positive $L$ then satisfies~\eqref{Eqn:CocoercivityIneq}.

We will often consider a discretized set of data for our function. Letting $\I = \{0, 1, \dots, N, \star\}$ for some $N$, we consider $\{(x_i, f_i, g_i) \}_{i \in \I}$, where $f_i = f(x_i)$ and $g_i = \gradB f(x_i)$. We define the following values, corresponding to our inequalities \eqref{Eqn:ConvexityIneq} and \eqref{Eqn:CocoercivityIneq} above: for $i,j\in\I$
\begin{align} \label{Eqn:Def_WQV}
    W_{i,j} &:= f_i - f_j - \dotB{g_j}{x_i - x_j}, \\
    Q_{i,j}(L) &:= f_i - f_j - \dotB{g_j}{x_i - x_j} - \frac{1}{2L} \normB{g_i - g_j}^2.
\end{align}
Thus, for a locally smooth convex function and an appropriate $L$, it holds that $W_{i,j}$ and $Q_{i,j}(L)$ are nonnegative for all $i,j \in \I$.

We will also consider the case of smooth, strongly convex functions. Recall that any function $f$ that is $\mu$-strongly convex with respect to $\langle\cdot,\cdot \rangle_B$ has 
\begin{equation} \label{Eqn:SCIneq}
    f(y) \geq f(x) + \dotB{\gradB f(x)}{y-x} + \frac{\mu}{2}\normB{x-y}^2 \qquad \forall x,y.
\end{equation}
Similar to the smooth case, we define the function $\tilde{\mu}_B(x,y) = \frac{f(y) - f(x) - \dotB{\gradB f(x)}{y-x}}{\frac{1}{2}\normB{x-y}^2}$
which returns the largest $\mu$ such that \eqref{Eqn:SCIneq} holds from $x$ to $y$.

\subsection{Design of Optimized Methods} \label{SubSec:Prelim_Design}
The derivation of many first-order methods can be written as an inductive argument in which one builds a nonnegative quantity at each iteration. A useful review of these arguments, also called potential-function proofs, can be found in \cite{PotentialFunctionProofs_Bansal,d_Aspremont_2021}. This inductive view of algorithm design and analysis will be central to our development herein. Below, we introduce three methods from the recent literature in this style: the Optimized Gradient Method (OGM) \cite{OGM}, the Subgame Perfect Gradient Method (SPGM) \cite{SPGM}, and the Optimized Backtracking Linesearch method (OBL) \cite{OBL}. Our method, ASPGM, will build upon these methods and, in particular, their inductions.

\subsubsection{OGM - An Inductive View}  \label{SubSec:OGM}
Assume for now that the smoothness constant $L$ with respect to an inner product $\langle \cdot,\cdot \rangle_B$ is known. The Optimized Gradient Method \cite{OGM} can be viewed as a method for maintaining an induction of the form
\begin{equation*}
H_n := \tau_n \left(f_\star - f_n + \frac{1}{2L}\normB{g_n}^2\right) + \frac{L}{2}\normB{x_0-x_\star}^2 - \frac{L}{2}\normB{z_{n+1}-x_\star}^2 \geq 0
\end{equation*}
for some $\tau_n >0$ and iterate sequences $x_n,z_{n+1}$. The base case of this induction can be established for any given $x_0$ by setting
$ z_{1} = x_0 - \frac{2}{L} g_0,\ \tau_0 = 2 $
since we then have
$ H_0 = 2Q_{\star, 0}(L) \geq 0 . $
Then OGM iterates by setting
\begin{equation}
    (\tau_n, x_n, z_{n+1}) = \texttt{OGM\_Update}(\tau_{n-1}, x_{n-1}, g_{n-1}, z_n)
\end{equation}
according to \cref{Alg:OGMUpdate}, where the modified final step formula applies either when the given iteration budget is exhausted ($n=N$) or when some stopping/restarting criterion is met.

\begin{algorithm}
    \caption{OGM Update Procedure} \label{Alg:OGMUpdate}
    \SetKwFunction{OGMUpdate}{OGM\_Update}
    \SetKwProg{Fn}{Function}{:}{\KwRet $(\tau_n,x_n,z_{n+1})$}
    \Fn{\OGMUpdate{$\hat{\tau}$, $x$, $g$, $z$}}
        {
        $\tau_n = \begin{cases}
            \hat{\tau} + \frac{1 + \sqrt{1+ 4\hat{\tau}}}{2} \qquad & \text{if \textbf{final step}} \\
            \hat{\tau} + 1 + \sqrt{1+2\hat{\tau}} \qquad & \text{otherwise}
        \end{cases}$  \\
        $x_n = \frac{\hat{\tau}}{\tau_n} (x - \frac{1}{L} g) + \frac{\tau_n - \hat{\tau}}{\tau_n}z$\\
        $z_{n+1} = z - \frac{\tau_{n}-\hat{\tau}}{L}g$ \\
        }
\end{algorithm}

\noindent These choices precisely ensure that the induction $H_n \geq 0$ continues, as one can verify that
\begin{equation}
    H_n = H_{n-1} + \tau_{n-1}Q_{n-1,n}(L) + (\tau_n-\tau_{n-1}) Q_{\star, n}(L) \geq 0.
\end{equation}
Equality can be shown above by expanding the polynomial $H_{n-1} + \tau_{n-1}Q_{n-1,n}(L) + (\tau_n-\tau_{n-1}) Q_{\star, n}(L)$ and collecting terms, and nonnegativity follows since each term in the sum is nonnegative.
A slight modification to this induction and algorithm is needed at the final iteration $N$, as one instead defines
\begin{equation} \label{Eqn:HN_Nonnegative}
H_N := \tau_N (f_\star - f_N) + \frac{L}{2}\normB{x_0-x_\star}^2 - \frac{L}{2}\normB{z_{N+1}-x_\star}^2 \geq 0.
\end{equation}
Rearranging \eqref{Eqn:HN_Nonnegative}, one arrives at a convergence rate of 
$ f_N - f_\star \leq \frac{\frac{L}{2}\normB{x_0-x_\star}^2}{\tau_N}$. This convergence guarantee is exactly minimax optimal among all gradient-span methods in the sense of~\eqref{eq:minimax-optimality}, proven by the matching lower bound of Drori~\cite{Drori_OGMOptimal}.

\subsubsection{SPGM - A Dynamically Optimized Induction}  \label{SubSec:SPGM}
Continuing to assume that the smoothness constant $L$ with respect to an inner product $\dotB{\cdot}{\cdot}$ is known, the Subgame Perfect Gradient Method \cite{SPGM}, and its limited memory variant, can be viewed as a dynamic improvement of OGM to use a stronger inductive hypothesis at each step. Rather than building $H_n\geq 0$ using the previous hypothesis $H_{n-1} \geq 0$, one could imagine an enhanced induction, constructing and using the strongest hypothesis of the form
\begin{equation} \label{Eqn:HPrimeDef}
    H' = \tau'\left(f_\star - f_m + \frac{1}{2L}\normB{g_m}^2\right) + \frac{L}{2}\normB{x_0-x_\star}^2 - \frac{L}{2}\normB{z' - x_\star}^2
\end{equation}
for some $\tau',z'$ and index $m$. Given a limited memory of size $k$, SPGM constructs such an aggregate nonnegative hypothesis by setting
\begin{equation} \label{Eqn:HPrimeInduction}
    H' = \sum_{i=n-\idxMem}^{n-1} \Ucoef_i H_i + \sum_{i=n-\idxMem}^{n-1} \Wcoef_i Q_{\star, i}(L) + \epsilon
\end{equation}
for nonnegative $\Ucoef,\Wcoef \in \R^k$ and $\epsilon \geq 0$. 

Let $Z \in \R^{d \times \idxMem}$ be the matrix with columns given by $z_{i+1}-x_0$ and indexed by $[n-\idxMem,n-1]$. Similarly, let $G \in \R^{d \times \idxMem}$ be the matrix with columns given by $\frac{g_i}{L}$ and indexed by $[n-\idxMem,n-1]$. Additionally, let $\tau = (\tau_{n-\idxMem}, \dots, \tau_{n-1})$ and let $v \in \R^\idxMem$ be defined componentwise by
\begin{equation*}
    v_i := f_i - \frac{1}{2L}\normB{g_i}^2.
\end{equation*}
Combining \eqref{Eqn:HPrimeDef} and \eqref{Eqn:HPrimeInduction}, one finds that any feasible hypothesis must set $z' = x_0 + Z\Ucoef - G\Wcoef$. Then, to maximize the value of $\tau'$ while keeping $H' \geq 0$, one should set
$ m \in\argmin_{i \in [n-\idxMem, n-1]} \{f_i - \frac{1}{2L}\normB{g_i}^2\}$. Using this index $\idxArgmin$, we further define vectors $q,r \in \R^\idxMem$ with components
\begin{align*}
    q_i & := \tau_i(f_i - \frac{1}{2L}\normB{g_i}^2) + \frac{L}{2}\normB{z_{i+1} - x_0}^2 - v_m \tau_i \\
    r_i & := f_i - \dotB{g_i}{x_i - x_0} + \frac{1}{2L}\normB{g_i}^2 - v_m.
\end{align*}
Under this selection, rearranging~\eqref{Eqn:HPrimeInduction} to solve for $\epsilon$, one obtains that 
\begin{equation*}
    \epsilon(\Ucoef,\Wcoef) = \ip{\Ucoef}{q} + \ip{\Wcoef}{r} - \frac{L}{2} \normB{Z \Ucoef - G \Wcoef}^2.
\end{equation*}
Then the strongest such hypothesis (i.e., the one maximizing $\tau'$) is attained by solving the second-order cone program of
\begin{equation} \label{Eqn:SOCP_OGM}
    \tau' = \begin{cases}
        \max & \ip{\Ucoef}{\tau} + \fullSum{\Wcoef} \\
        \mathrm{s.t.} & \epsilon(\Ucoef, \Wcoef) \geq 0\\
        & \Ucoef, \Wcoef \geq 0.
        \end{cases}
\end{equation}
Note that setting $\Ucoef=(0,\dots,0,1)$, $\Wcoef=0$, $\epsilon=0$, is feasible and recovers $H'=H_{n-1}$ when $m=n-1$, i.e., the hypothesis used by OGM. Thus $\tau' \geq \tau_{n-1}$, ensuring SPGM's intermediate aggregate hypothesis is always at least as strong as the minimax optimal induction of OGM.

SPGM then iterates by repeatedly constructing this optimized aggregate hypothesis and then applying the OGM step~(\cref{Alg:OGMUpdate}) from it to construct $H_n \geq 0$. This is formalized in \cref{Alg:SPGM}. Grimmer, Shu, and Wang~\cite{SPGM} established that SPGM's overall induction is at least as strong as OGM's and therefore SPGM is also exactly minimax optimal.
Further when a full memory is used, they constructed a dynamic adversarial strategy for revealing first-order information, proving a matching lower bound on the possible performance of any first-order method. Since this dynamic lower bound exactly matched SPGM's induction for every $n$, the authors conclude its dynamic guarantees are the strongest possible on the final objective gap given any history of first-order observations. That is, SPGM is subgame perfect in the sense of~\eqref{eq:subgame-perfect}.

\begin{algorithm}
\caption{SPGM} \label{Alg:SPGM} 
    \SetKwInOut{Input}{Input}
    \SetKwInOut{Output}{Output}
    \Input{ Convex function $f$, memory size $k \in \mathbb{N}$, $x_0 \in \R^d$, $L > 0$, $B \succ 0$}
    Set $\tau_0 = 2, \ z_1 = x_0 - \frac{2}{L} \gradB f(x_0)$\\
    \For{$n = 1,\dots,N$}
        {Compute $f_{n-1} = f(x_{n-1})$, $g_{n-1} = \gradB f(x_{n-1})$ \\
        Update memory $\Mem = \{(x_i,f_i,g_i,z_{i+1},\tau_i)\}_{i=n-k}^{n-1}$ and construct $Z$ and $G$ \\
        Set $m\in\argmin_{i \in [n-\idxMem, n-1]}\{f_i - \frac{1}{2L}\|g_i\|^2_B\}$ \\
        \If{\text{\eqref{Eqn:SOCP_OGM} is unbounded}}
            {Set $x_n = x_m - \frac{1}{L} g_m$ and \textbf{break}}
        Compute $\Ucoef,\Wcoef$ by solving~\eqref{Eqn:SOCP_OGM} and set $z' = x_0 + Z\Ucoef - G\Wcoef$ and $\tau' = \ip{ \Ucoef}{\tau} + \fullSum{\Wcoef} $ \\
        Set $(\tau_n,x_n,z_{n+1}) = \texttt{OGM\_Update}(\tau', x_m, g_m, z')$ according to \cref{Alg:OGMUpdate}}
    \Output{ $x_n$}
\end{algorithm}

\subsubsection{OBL - A Backtracking-Friendly Induction} \label{SubSec:OBL}
Both of the above methods rely on knowledge of the global smoothness constant $L$, which can be both unrealistic in practice and lead to overly conservative algorithms. One classic approach to avoiding this is the use of backtracking, where at each iteration an estimate $L_n$ is utilized to compute a step. Then, whichever inequalities are needed by the proof are checked; if they hold the step is accepted, otherwise $L_n$ is increased, typically to $2L_n$, and the process repeats.

To apply this reasoning to OGM, one would need to verify at each step the nonnegativity of $Q_{n-1,n}(L_n)$ and $Q_{\star, n}(L_n)$. This first quantity, $Q_{n-1,n}(L_n)$, is easy to compute at runtime. However, the second quantity $Q_{\star, n}(L_n)$ requires knowledge of $x_\star$ to be evaluated and hence prevents backtracking. The work of Park and Ryu~\cite{OBL} showed this issue can be circumvented by replacing the cocoercivity inequality $Q_{\star, n}(L_n)$ in OGM's induction with the weaker convexity inequality $W_{\star,n}$, which is independent of $L_n$. Carrying out this slightly weaker induction, one arrives at the backtracking-friendly OBL algorithm of~\cite{OBL}.

Each iteration maintains an inductive hypothesis of the form
\begin{equation} \label{Eqn:Def_U}
U_n := \tau_n \left(f_\star - f_n + \frac{1}{2L_n}\normB{g_n}^2\right) + \frac{L_n}{2}\normB{x_0-x_\star}^2 - \frac{L_n}{2}\normB{z_{n+1}-x_\star}^2 +\Delta_n \geq 0
\end{equation}
where the additional nonnegative constant $\Delta_n$ above accumulates error terms due to changing $L_n$. 
As a base case, one can initialize this induction with $\tau_0 = 1, z_1 = x_0 - \frac{1}{L_0} g_0$ to obtain $U_0 = W_{\star,0} \geq 0$.
Then, given $U_{n-1}\geq 0$ for some $\tau_{n-1},x_{n-1},z_n, \Delta_{n-1}$, OBL generates a test iterate by setting
\begin{equation} \label{Eqn:StandardOBLUpdate}
    (\tau_n, x_n, z_{n+1}, \Delta_n) = \texttt{OBL\_Update}(\tau_{n-1}, x_{n-1}, g_{n-1}, z_n, L_n, \frac{L_n}{L_{n-1}} \Delta_{n-1}, \delta_n)
\end{equation}
according to \cref{Alg:OBLUpdate}, where $\delta_n = L_n \tau_{n-1}\left(\frac{1}{L_{n-1}^2} - \frac{1}{L_n^2}\right) \frac{1}{2}\normB{g_{n-1}}^2$. If $Q_{n-1,n}(L_n) \geq 0$, then the iterate is accepted. Otherwise, one backtracks, setting $L_n \leftarrow 2 L_n$, discarding the computed $(x_n,f_n,g_n)$, and repeating \cref{Alg:OBLUpdate} until an iterate is accepted.

\begin{algorithm}
    \caption{OBL Update Procedure} \label{Alg:OBLUpdate}
    \SetKwFunction{OBLUpdate}{OBL\_Update}
    \SetKwProg{Fn}{Function}{:}{\KwRet $(\tau_n,x_n,z_{n+1},\Delta_n)$}
    \Fn{\OBLUpdate{$\hat{\tau}$, $x$, $g$, $z$, $L$, $\hat{\Delta}$, $\hat{\delta}$}}
        {
        $\tau_n = \begin{cases} 
            \hat{\tau} + \sqrt{\hat{\tau}} \qquad & \text{if \textbf{final step}} \\
            \hat{\tau} + \frac{1 + \sqrt{1+8\hat{\tau}}}{2} \qquad & \text{otherwise}
            \end{cases}$ \\
        $x_n = \frac{\hat{\tau}}{\tau_n} (x - \frac{1}{L} g) + \frac{\tau_n - \hat{\tau}}{\tau_n}z$\\
        $z_{n+1} = z - \frac{\tau_{n}-\hat{\tau}}{L}g$ \\
        $\Delta_n = \hat{\Delta} + \hat{\delta}$
        }
\end{algorithm}
Following this, one has that (see \cite[Theorem 6]{OBL})
\begin{equation*}
    U_n = \frac{L_n}{L_{n-1}} U_{n-1} + \tau_{n-1} Q_{n-1,n}(L_n) + (\tau_n - \tau_{n-1}) W_{\star, n} + \tau_{n-1}\left(\frac{L_n}{L_{n-1}} - 1 \right) (f_{n-1} - f_\star) \geq 0
\end{equation*}
where the equality is verified by expanding polynomial terms, and the inequality follows as each term is nonnegative. Hence, after $N$ iterations (and again modifying our final induction $U_N$) this backtracking provides a guarantee of
$ f_N - f_\star \leq \frac{\frac{L_N}{2}\normB{x_0-x_\star}^2 + \Delta_N}{\tau_N}$.
If a valid smoothness constant $L_0 = L$ is given initially, one will have $\Delta_N=0$. In this setting, \cite{OBL} proved this is the best possible convergence rate any first-order method can attain against a first-order oracle whose responses satisfy the inequalities $\{Q_{i-1,i}\}_{i = 1}^n \cup \{W_{\star, i}\}_{i=0}^n$.

    \section{Algorithm} \label{Sec:Algorithm}

In this section, we gradually build our algorithm ASPGM. In \cref{SubSec:Dynamic}, we build a dynamic optimization approach improving OBL's induction, mirroring that of SPGM, but with $L$ unknown. In \cref{SubSec:LSFree}, we demonstrate how this can be extended to a linesearch-free method we call the Backtracking-free Subgame Perfect Gradient Method (BSPGM). Section~\ref{SubSec:BSPGM_theory} presents convergence guarantees for BSPGM, including its subgame perfection. Finally, in \cref{SubSec:AdRestart} and \cref{SubSec:Precon}, we develop adaptive restarting and preconditioning and present our full algorithm ASPGM, using BSPGM as a subroutine.

\subsection{Dynamically Optimized Induction} \label{SubSec:Dynamic}

We consider the case of minimizing a locally smooth function, with unknown smoothness constant $L$. Additionally, we suppose that we have access to a memory of size $\idxMem$ of past algorithm information: $\Mem = \MemExpr$.

We begin by introducing some notation. Given $L_n > 0$, define the vector $v \in \R^\idxMem$ by its components:
\begin{align} \label{Eqn:Def_hwv}
    v_i & := f_i - \frac{1}{2L_n}\normB{g_i}^2
\end{align}
for $i \in [n-\idxMem, n-1]$.
Next, given a vector $\tau = (\tau_{n-\idxMem}, \dots, \tau_{n-1})$ and letting $\idxSetSuccess = \{i \in [n-\idxMem, n-1] \mid \tau_i > 0\}$, define two special indices $\idxArgmin$ and $\idxLastSuccess$ according to
\begin{align} 
    & \idxArgmin \in \argmin_{i \in \idxSetSuccess} v_i \label{Eqn:Def_idx_m}, \\
    & \idxLastSuccess = \max\{i \in \idxSetSuccess\} \label{Eqn:Def_idx_s} .
\end{align}
Colloquially, $\idxArgmin$ represents the ``best'' iterate so far, with respect to our induction $U_i$. As we will see, $x_\idxArgmin$ will be important in determining our next iterate $x_n$. On the other hand, $\idxLastSuccess$ corresponds to the most recent iterate for which our induction $U_i$ was nontrivial ($\tau_i\neq 0$). This index will play a key role in our induction and error term collection. Note that for each $i\not\in \idxSetSuccess$, we will have $U_i \equiv 0$. Further define $\delta_n$ as
\begin{equation}\label{Eqn:deltaExpr}
    \delta_n := \deltaExpr
\end{equation}
and the vectors $a,b \in \R^\idxMem$ with components
\begin{align}
    a_i & := \tau_i(f_i - \frac{1}{2L_i}\normB{g_i}^2) + \frac{L_i}{2}\normB{z_{i+1} - x_0}^2 - v_m \tau_i \\
    b_i & := f_i - \dotB{g_i}{x_i - x_0} - v_m
\end{align}
for $i\in[n-k,n-1]$.
Note that $v_i$ is dependent on the current $L_n$, while $a_i$ and $b_i$ also depend on the past value $L_i$.

We recall the inductive quantity $U_i$ \eqref{Eqn:Def_U} from OBL and now apply the dynamic induction strategy introduced in \cref{SubSec:OBL}. We define
\begin{equation} \label{Eqn:Def_UPrime}
    U' = \tau'\left(f_\star - f_m + \frac{1}{2 L_n}\normB{g_m}^2 \right) + \frac{L_n}{2}\normB{x_0 - x_\star}^2 - \frac{L_n}{2}\normB{z' - x_\star}^2 + \Delta' + \delta_n
\end{equation}
for some $\tau',z'$, and $\Delta' \geq 0$. We construct our enhanced hypothesis $U'$ by setting
\begin{equation} \label{Eqn:UPrimeInduction}
    U' = \sum_{i=n-\idxMem}^{n-1} \Ucoef_i U_i + \sum_{i=n-\idxMem}^{n-1}\Wcoef_i W_{\star,i} + \epsilon
\end{equation}
for nonnegative $\Ucoef, \Wcoef \in \R^k$ and $ \epsilon \geq 0$.
Proceeding as in SPGM, we seek to maximize $\tau'$ while keeping the error term $\Delta'$ bounded. Here we briefly summarize how this dynamic optimization is achieved, but we include a more detailed derivation in \cref{SubSec:ConvRateProof}.

Let $Z \in \R^{d \times \idxMem}$ be the matrix with columns given by $\frac{L_i}{L_n}(z_{i+1}-x_0)$ and indexed by $[n-\idxMem,n-1]$. Similarly, let $G \in \R^{d \times \idxMem}$ be the matrix with columns now given by $\frac{g_i}{L_n}$ and indexed by $[n-\idxMem,n-1]$.
Once again, by rearranging \eqref{Eqn:UPrimeInduction} and applying a bound on $\Delta'$, we find that the following constraints  must be satisfied:
\begin{align}
    z' & = x_0 + Z \Ucoef - G \Wcoef,  \label{Eqn:zPrime} \\ 
    \tau' &= \ip{\Ucoef}{\tau} + \fullSum{\Wcoef}, \\
    \epsilon &= \epsFunc =  \ip{\Ucoef}{a} + \ip{\Wcoef}{b} + \delta_n - \frac{L_n}{2} \normB{Z \Ucoef - G \Wcoef}^2. \label{Eqn:epsFunc}
\end{align}
In order to guarantee the nonnegativity of $U'$ we must have $\epsFunc \geq 0$. Thus, our dynamic optimization becomes the following subproblem with a single second-order cone constraint:
\begin{equation} \label{Eqn:SOCP}
    \tau' = \left\{\begin{array}{ll}
            \max_{\Ucoef, \Wcoef} & \ip{\Ucoef}{\tau} + \fullSum{\Wcoef} \\
            \text{s.t.} & \epsFunc \geq 0 \\
            & \Ucoef, \, \Wcoef \geq 0. \\
            \end{array} \right.
\end{equation}

Observe that maximizing $\tau'$ in \eqref{Eqn:SOCP} once again reduces to solving a low-dimensional second order cone program (SOCP). We emphasize that this SOCP is independent of the problem dimension $d$. Specifically, the subproblem \eqref{Eqn:SOCP} has dimension $2\idxMem$, where $\idxMem$ is the chosen memory size. By appropriately tracking and storing $O(\idxMem^2)$ past vector-vector products (i.e., $Z^T Z, G^T G, G^T Z$), we eliminate any matrix-vector products with full-dimension matrices. Additionally, the problem has a linear objective with a single quadratic constraint, further reducing the complexity of the subproblem. Thus, for high-dimensional ($k \ll d$) problems with moderate costs for gradient and/or function evaluations, the per iteration computational cost of \eqref{Eqn:SOCP} becomes negligible. 

\subsection{A Backtracking-free Subgame Perfect Gradient Method (BSPGM)} \label{SubSec:LSFree}
This dynamic induction already suggests a reasonable backtracking method. Given $\tau'$ and $z'$ after solving \eqref{Eqn:SOCP}, generate $x_n$ (along with $\tau_n, z_{n+1}, \Delta_n$) according to \eqref{Eqn:StandardOBLUpdate}. We then check if the current estimate of $L_n$ is valid, i.e., if $Q_{\idxArgmin, n}(L_n) \geq 0$. If so, accept $x_n$ and continue. If not, reject $x_n$, increase $L_n \leftarrow 2L_n$, and repeat the update rule. Note that when $x_n$ is accepted, this indicates that our hypothesis $U_n \geq 0$ is valid, so the induction can continue.

While this approach represents an effective algorithm, it is inefficient in terms of how it uses new oracle information. When a proposed next iterate $x_n$ is rejected, the new gradient $g_n$ and function value $f_n$ are effectively discarded without providing additional information beyond the fact that the current estimate $L_n$ is too small. For high-dimensional problems, first-order oracles can have a large computational cost; we therefore want to learn as much information as possible from each oracle call.

In the case that $Q_{\idxArgmin,n}(L_n) < 0$, the computed first-order information still satisfies convexity, ensuring $W_{\star,n} = f_\star - f_n - \langle g_n, x_\star - x_n \rangle \geq 0$. As an alternate strategy to backtracking in this case, we can accept the computed iterate $x_n$, adding its function value and gradient to our history, but modify our induction to avoid usage of $Q_{\idxArgmin,n}(L_n)$. In particular, we set $\tau_n = 0, z_{n+1} = x_0, \Delta_n = 0$ to ensure that $U_n \geq 0$ holds vacuously, as
\begin{align*}
    U_n & = \tau_n\left(f_\star - f_n + \frac{1}{2L_n}\normB{g_n}^2\right) + \frac{L_n}{2}\normB{x_0-x_\star}^2-\frac{L_n}{2}\normB{z_{n+1}-x_\star}^2 + \Delta_n \\
    & = 0 + \frac{L_n}{2}\normB{x_0-x_\star}^2-\frac{L_n}{2}\normB{x_0-x_\star}^2 + 0 \\
    & = 0.
\end{align*}
We therefore continue our induction of $U_n \geq 0$ with no dependence on the negative term $Q_{m,n}(L_n)$.

This strategy is similar to the typical bundle method approach of allowing both ``serious steps'' where the main iterate sequence updates and ``null steps'' where only the bundle of first-order information is improved.
\cref{Alg:BaseAlg} formalizes our use of this conditional approach. We emphasize that while there are remnants of a backtracking mechanism in how we iteratively increase $L_n$, this method is notionally distinct as we are not discarding any computed $x_n$ and associated first-order oracle values when $L_n$ increases. Rather, $(x_n, f_n, g_n)$ are incorporated into our bundle of historical information via the nonnegative quantity $W_{\star,n}$. We also note the use of $\tilde{L}_B(x_m, x_n)$ to increase $L_n$ matches the auto-conditioning approach used by \cite{AdGD_Malitsky,ACFGM_Li,AdaNAG_Suh} and others.

\begin{algorithm}[ht]
\caption{BSPGM: A Backtracking-free Subgame Perfect Gradient Method} \label{Alg:BaseAlg} 
    \SetKwInOut{Input}{Input}
    \SetKwInOut{Output}{Output}
    \Input{ Convex, locally smooth function $f$, subgame memory size $k \in \mathbb{N}$, $x_0 \in \R^d$, $L_0 > 0$, $B \succ 0$}
    Set $\tau_0 = 1, z_1 = x_0 - \frac{1}{L_0} \gradB f(x_0), L_1 = L_0$ \\
    \For{$n=1,2,\dots$}
        {Compute $f_{n-1} = f(x_{n-1})$, $g_{n-1} = \gradB f(x_{n-1})$ \\
        Strategically update memory $\Mem = \{(x_i,f_i,g_i,\tau_i,z_{i+1}, L_i, \Delta_i)\}_{i=n-k}^{n-1}$, construct $Z$ and $G$ \\
        Select indices $s,m$ according to \eqref{Eqn:Def_idx_m} and \eqref{Eqn:Def_idx_s} \\
        \If{\eqref{Eqn:SOCP} is unbounded}{
            Set $x_n = x_m - \frac{1}{L_n} g_m$ and \textbf{break}}
        Compute $\Ucoef, \Wcoef$ by solving \eqref{Eqn:SOCP} with $\delta_n$ set according to \eqref{Eqn:deltaExpr}, and set $z' = x_0 + Z\Ucoef - G\Wcoef$, $\tau' = \ip{\Ucoef}{ \tau} + \fullSum{\Wcoef} $, and $\Delta' = \ip{\Ucoef}{\Delta}$ \\
        Set $(\tau_n,x_n,z_{n+1},\Delta_n) = \texttt{OBL\_Update}(\tau', x_m, g_m, z', L_n, \Delta', \delta_n)$ according to \cref{Alg:OBLUpdate} \\
        \uIf{$\tilde{L}_B(x_m,x_n) > L_n$}
            {
             $\quad L_{n+1} = \max\{\tilde{L}_B(x_m,x_n), 2 L_n\}, \quad \tau_n = 0, \quad z_{n+1} = x_0, \quad  \Delta_n = 0$
            }
        \ElseIf{\textit{Stopping condition}}{
            \textbf{break}
        }
        }
    \Output{ $x_n$, $\Mem$}
\end{algorithm}

\begin{remark}
    To guarantee \cref{Alg:BaseAlg} is well-defined, it is necessary to ensure that $\idxSetSuccess \neq \emptyset$. This is the role of our ``strategic update'' to our memory $\Mem$. In the case that $\tau_{n-k-1} > 0$ but $\tau_i = 0$ for $i \in [n-k, n-1]$, we discard from our memory the data indexed by $n-k$ rather than $n-k-1$ (resulting in a minor abuse of notation). In this rare scenario, this step prevents the last nonzero $\tau_i$ from being discarded and enables our continued induction with $U_i$ without losing progress.
\end{remark}

\begin{remark}
    Recall that for $i \notin \idxSetSuccess$, we have $\tau_i = 0$ and $\Delta_i = 0$. Consequently, the corresponding coefficients $\Ucoef_i$ in \eqref{Eqn:SOCP} have no effect on the subproblem's solution. We can therefore fix $\Ucoef_i = 0$ for all $i \notin \idxSetSuccess$ to reduce the number of variables in \eqref{Eqn:SOCP} for a minor computational improvement.
\end{remark}

In the following lemma, we argue that our subproblem \eqref{Eqn:SOCP} is feasible and well-defined. We defer the proof to \cref{SubSec:FeasibilityProof_Part1} and \cref{SubSec:FeasibilityProof_Part2}.

\begin{lemma} \label{Lem:Feasibility}
    Consider any sequences $x_n, \tau_n, z_{n+1}, \Delta_n$ generated by BSPGM. Then
    \begin{enumerate}[label=(\roman*)]
        \item The problem \eqref{Eqn:SOCP} is feasible.
        \item If \eqref{Eqn:SOCP} is bounded, then it is strictly feasible and strong duality holds. Moreover, the optimal value $\tau'$ is attained and satisfies $\tau' \geq \tau_\idxLastSuccess$.
        \item If \eqref{Eqn:SOCP} is unbounded, then $x_\idxArgmin - \frac{1}{L_n} g_\idxArgmin \in \argmin f$.
    \end{enumerate}
\end{lemma}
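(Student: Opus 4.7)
The plan is to prove each of the three parts separately, all leveraging the algebraic identity
\begin{equation*}
U'(\rho,\gamma) = \sum_{i=n-k}^{n-1} \rho_i U_i + \sum_{i=n-k}^{n-1} \gamma_i W_{\star,i} + \epsilon(\rho,\gamma)
\end{equation*}
developed in Section~\ref{SubSec:Dynamic}, which holds as an identity in $x_\star, f_\star$ by construction. Since each $U_i \geq 0$ (by the algorithm's maintained induction) and each $W_{\star,i} \geq 0$ (by convexity), the SOCP constraint $\epsilon(\rho,\gamma) \geq 0$ is exactly what guarantees $U' \geq 0$.

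For (i), I plan to exhibit the explicit feasible point $(\rho,\gamma) = (e_s, 0)$ where $e_s$ is the standard basis vector at index $s$. Substituting into \eqref{Eqn:epsFunc} and expanding $a_s$ using the collapse $\tfrac{L_s}{2}\normB{z_{s+1}}^2 - \tfrac{L_s}{2}\normB{x_0}^2 - L_s\dotB{z_{s+1}-x_0}{x_0} = \tfrac{L_s}{2}\normB{z_{s+1}-x_0}^2$, then rewriting $f_s - \tfrac{1}{2L_s}\normB{g_s}^2$ via $v_s$ and combining with the $\delta_n$ contribution, the expression should decompose into three manifestly nonnegative summands:
\begin{equation*}
\epsilon(e_s, 0) = \tau_s(v_s - v_m) + \tfrac{L_s}{2}\bigl(1 - \tfrac{L_s}{L_n}\bigr)\normB{z_{s+1} - x_0}^2 + \tfrac{\tau_s(L_n - L_s)}{2L_s^2}\normB{g_s}^2.
\end{equation*}
Nonnegativity follows from $m \in \argmin_i v_i$ and the monotonicity $L_n \geq L_s$ preserved by \cref{Alg:BaseAlg}. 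Since the objective at $(e_s, 0)$ is exactly $\tau_s$, this simultaneously establishes the bound $\tau' \geq \tau_s$ claimed in (ii).

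For (ii), my plan is to derive strict feasibility by contradiction and then invoke Slater's condition. If no strictly feasible point existed, evaluating $\epsilon$ at the origin would force $\delta_n = 0$, and the small perturbations $(\alpha e_i, 0)$ and $(0, \alpha e_i)$ would force $a, b \leq 0$ componentwise. The feasible set then collapses to the cone $\{(\rho,\gamma) \geq 0 : Z\rho = G\gamma,\, a^T\rho = 0,\, b^T\gamma = 0\}$, which already contains $(e_s, 0)$ by part (i); scaling along this ray gives objective $\alpha\tau_s \to \infty$, contradicting boundedness. Strict feasibility combined with boundedness then delivers strong duality and attainment of $\tau'$ via standard convex programming theory.

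For (iii), my plan is to work along a recession direction $(\tilde\rho, \tilde\gamma) \geq 0$ with $Z\tilde\rho = G\tilde\gamma$, $a^T\tilde\rho + b^T\tilde\gamma \geq 0$, and $\tilde\tau := \tau^T\tilde\rho + \ones^T\tilde\gamma > 0$, which unboundedness of the SOCP must supply. The constraint $Z\tilde\rho = G\tilde\gamma$ makes $z'(t) \equiv x_0$ along the ray $t(\tilde\rho, \tilde\gamma)$, cancelling the $\frac{L_n}{2}(\normB{x_0-x_\star}^2 - \normB{z'-x_\star}^2)$ terms in $U'$. The identity then reduces to the linear-in-$t$ relation $\tilde\tau(f_\star - v_m) + \tilde\Delta = \sum \tilde\rho_i U_i + \sum \tilde\gamma_i W_{\star,i} + (a^T\tilde\rho + b^T\tilde\gamma)$, whose right side is nonneg, yielding $f_\star \geq v_m - \tilde\Delta/\tilde\tau$. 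I expect the main obstacle to be controlling $\tilde\Delta = \sum \tilde\rho_i \Delta_i$; I would try to show a recession direction with $\tilde\Delta = 0$ always exists in the unbounded case (for instance, by choosing $\tilde\rho$ supported on indices where $\Delta_i = 0$, exploiting the specific structure by which $\Delta_i$ accumulates from $L_n$ changes). This would upgrade the bound to $f_\star \geq v_m$; combined with convexity at $x_m$ giving $f(y^*) \geq f_m - \tfrac{1}{L_n}\normB{g_m}^2$ and local $L_n$-smoothness giving $f(y^*) \leq v_m$ at $y^* = x_m - \tfrac{1}{L_n}g_m$, we would conclude $f(y^*) = f_\star$, establishing $y^* \in \argmin f$.
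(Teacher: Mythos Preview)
Parts (i) and (ii) are correct and close to the paper's argument. For (i) you exhibit $(e_s,0)$ as feasible with objective value $\tau_s$; the paper instead uses the one-parameter family $(c\tfrac{L_n}{L_s}e_s,0)$, $c\in[0,1]$, which at $c=1$ gives the slightly stronger bound $\tau'\geq\tfrac{L_n}{L_s}\tau_s$, but either computation suffices. For (ii), your contradiction argument is a sound alternative to the paper's direct route: the paper first shows by contrapositive that boundedness forces $z_{s+1}\neq x_0$ or $L_s\neq L_n$, and then exhibits a strictly feasible point from the same family with $c\in(0,1)$.

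Part (iii) is where your plan departs from the paper and acquires a genuine gap. The paper does not work from $U'\geq 0$ directly; it feeds the ray $(c\rho,c\gamma)$ into the OBL update to produce a hypothetical iterate $x_n(c)$, observes $x_n(c)\to x_m-\tfrac{1}{L_n}g_m$ as $c\to\infty$, and then \emph{invokes the convergence bound of \cref{Thm:ConvRate}} (already established from (i)--(ii)) along this family to obtain $f(x_n(c))-f_\star\to 0$ by continuity of $f$. Your route instead relies on the descent inequality $f(y^*)\leq v_m$ ``from local $L_n$-smoothness,'' and this step is not justified: the algorithm never certifies that $L_n$ is a valid cocoercivity constant on the segment $[x_m,\,x_m-\tfrac{1}{L_n}g_m]$, since $y^*$ is not an iterate it has taken or tested. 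The paper's limiting argument sidesteps this by bounding $f(x_n(c))$ through the already-proved theorem rather than by a smoothness inequality at $y^*$. Your observation about $\tilde\Delta=\sum\tilde\rho_i\Delta_i$ is a fair concern, and the paper's writeup does not address it explicitly either; but your proposed remedy (restricting $\tilde\rho$ to indices with $\Delta_i=0$) does not obviously preserve a recession direction with positive objective, so this part of the plan remains incomplete.
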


\subsection{Convergence Guarantees and Subgame Perfection of BSPGM}\label{SubSec:BSPGM_theory}
To maintain our inductive hypothesis $U_n \geq 0$, it suffices to verify that
\begin{equation} \label{Eqn:UnInduction}
    U_n = U' + \tau' Q_{m,n}(L_n) + (\tau_n - \tau') W_{\star,n}.
\end{equation}
The result then follows by the nonnegativity of our terms, provided that $Q_{m,n}(L_n) \geq 0$. This immediately provides the following convergence guarantee at each ``serious'' iteration where $Q_{m,n}(L_n) \geq 0$. We include a detailed proof in \cref{SubSec:ConvRateProof}.

\begin{theorem} \label{Thm:ConvRate}
    Consider any sequences $x_n, \tau_n, z_{n+1}, \Delta_n$ generated by \cref{Alg:BaseAlg}. 
    Then each serious step $n\in\{0,\dots, N\}$ (that is, with $\tau_n\neq 0$) has
    \begin{align} 
        f(x_n) -\frac{1}{2L_n}\|g_n\|^2_B - f(x_\star) &\leq \frac{\frac{L_n}{2} \|x_0 - x_\star\|^2_B + \Delta_n}{ \tau_n} \qquad\qquad &&\text{if\ } n <N \label{Eqn:ConvRate}\\
        f(x_N) - f(x_\star) &\leq \frac{\frac{L_N}{2} \|x_0 - x_\star\|^2_B + \Delta_N}{ \tau_N} \qquad\qquad &&\text{if\ } n=N. \label{Eqn:ConvRateN}
    \end{align}
\end{theorem}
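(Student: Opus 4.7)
The plan is to establish the result by induction on $n$, showing that the OBL-style inductive quantity $U_n$ from~\eqref{Eqn:Def_U} remains nonnegative, and then rearranging $U_n \geq 0$ into the stated bounds. The base case is a direct computation: with $\tau_0 = 1$, $z_1 = x_0 - \frac{1}{L_0} g_0$, and $\Delta_0 = 0$ (exactly the initialization in BSPGM), expanding $\tfrac{L_0}{2}\normB{z_1 - x_\star}^2$ shows the squared-norm and $\tfrac{1}{2L_0}\normB{g_0}^2$ contributions cancel, leaving $U_0 = W_{\star,0} \geq 0$ by convexity.

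For the inductive step at a serious iteration $n$, the target is the polynomial identity already flagged by the paper in~\eqref{Eqn:UnInduction},
\begin{equation*}
U_n = U' + \tau' Q_{m,n}(L_n) + (\tau_n - \tau') W_{\star,n}.
\end{equation*}
The right-hand side is nonnegative term by term: $U' \geq 0$ because by~\eqref{Eqn:UPrimeInduction} and \cref{Lem:Feasibility}, $U'$ is a nonnegative combination of the memory's $U_i \geq 0$ (by the inductive hypothesis) and $W_{\star, i} \geq 0$ (by convexity), plus the SOCP slack $\epsilon(\Ucoef, \Wcoef) \geq 0$; the cocoercivity term $Q_{m,n}(L_n) \geq 0$ follows from the serious-step condition $\tilde{L}_B(x_m, x_n) \leq L_n$; $W_{\star, n} \geq 0$ by convexity; and $\tau_n \geq \tau'$ is immediate from the \texttt{OBL\_Update} formulas. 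Null steps are handled trivially: the reset $\tau_n = 0$, $z_{n+1} = x_0$, $\Delta_n = 0$ makes $U_n \equiv 0$, so no new algebraic condition is needed. Once $U_n \geq 0$ is in hand for $n < N$, dropping the nonpositive term $-\tfrac{L_n}{2}\normB{z_{n+1} - x_\star}^2$ and dividing by $\tau_n > 0$ yields~\eqref{Eqn:ConvRate}. For the terminal iterate $n = N$, the ``final step'' branch of \cref{Alg:OBLUpdate} uses the weaker convexity inequality $W_{\star, N}$ in place of $Q_{m, N}(L_N)$ (calibrated by setting $\tau_N = \hat{\tau} + \sqrt{\hat{\tau}}$); the corresponding inductive quantity drops the $\tfrac{1}{2L_N}\normB{g_N}^2$ term, and the same rearrangement delivers~\eqref{Eqn:ConvRateN}.

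The main obstacle is verifying the polynomial identity above. In principle this is a bookkeeping exercise that generalizes the analogous identity for SPGM, but here the smoothness estimates $L_i$ vary with $i$, so the matrices $Z, G$ and the vectors $a, b$ from \cref{SubSec:Dynamic} carry the $L_i/L_n$ rescalings, and the error term $\Delta_n = \Delta' + \delta_n$ must exactly absorb the mismatch introduced by these rescalings --- precisely why $\delta_n$ is defined as in~\eqref{Eqn:deltaExpr}. Concretely, one must expand both sides of the identity, isolate the quadratic-in-$x_\star$ contributions from $\tfrac{L_n}{2}\normB{z_{n+1} - x_\star}^2$ and from the $\tfrac{L_n}{2}\normB{Z\Ucoef - G\Wcoef}^2$ piece hidden inside $\epsilon(\Ucoef, \Wcoef)$, and verify that these cancel with the cross terms produced by $U'$, the $v_m \tau_i$ subtractions inside $a_i$, and the $v_m$ subtraction inside $b_i$. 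Once this identity is established, the nonnegativity-and-rearrange steps described above close the argument.
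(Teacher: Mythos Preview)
Your plan is essentially the paper's own proof: establish $U_0 = W_{\star,0} \geq 0$ as the base case, verify the polynomial identity~\eqref{Eqn:UnInduction} term-by-term at each serious step (the paper does this explicitly via the coefficient-matching equations \eqref{Eqn:Derivation_Eq1}--\eqref{Eqn:Derivation_Eq7} in \cref{SubSec:ConvRateProof}), and rearrange $U_n \geq 0$.

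One small correction on the terminal step: the final-step identity does \emph{not} replace $Q_{m,N}(L_N)$ by $W_{\star,N}$; it still reads $U_N = U' + \tau' Q_{m,N}(L_N) + (\tau_N - \tau') W_{\star,N}$ with both inequalities present. What changes is only the \emph{target} $U_N$, which omits the $\tfrac{1}{2L_N}\normB{g_N}^2$ term, and correspondingly the coefficient-matching condition~\eqref{Eqn:Derivation_Eq6} becomes $-\alpha^2 = -\tau'$, forcing $\alpha = \sqrt{\tau'}$ and hence $\tau_N = \tau' + \sqrt{\tau'}$. If you attempted the algebra with $Q_{m,N}$ actually removed, the $\langle g_m, g_n\rangle$ and $\normB{g_m}^2$ cross-terms on the right would have nothing to cancel against and the identity would fail. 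With that fix, your outline goes through exactly as in the paper.
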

At each ``null'' step, where $\tau_i=0$, the value of $L_n$ at least doubles. If the function $f$ is $L$-smooth with respect to $\langle \cdot,\cdot\rangle_B$ on a convex region containing all of the iterates, then there can be at most $\log_2(L/L_0)$ null steps. A simple calculation guarantees that $\tau_n$ grows at least quadratically in the number of ``serious'' steps\footnote{This can be seen since $\hat{\tau} \geq \frac{1}{2}n^2$ implies $\hat{\tau} + \frac{1+\sqrt{1+8\hat{\tau}}}{2} \geq \frac{1}{2}(n+1)^2$}. As a result, when applied to $L$-smooth convex functions, the above guarantees decrease at the minimax optimal big-O rate of $O(L\|x_0-x_\star\|^2_B/N^2)$.

\begin{remark} \label{Rem:DeltaN}
    We argue informally that our error term $\Delta_N$ should be small relative to $\tau_N$. At each step of \cref{Alg:BaseAlg}, we have $\delta_n > 0$ if and only if $L_n > L_\idxLastSuccess$ (i.e., we took a null step). Therefore, $\Delta_N$ is a function of the number of null steps and should be proportional to $\log_2(\frac{L_N}{L_0})$. Next, since $\tau_N$ grows with order $O(N^2)$ and our gradient norms $\normB{g_i}^2$ are generally decreasing, we expect $\Delta_N$ to be negligible.
    
    Through induction, for a given $n$, we can express $\Delta_n$ as a recursive product of past $\Ucoef$ values and gradient norms, but it is difficult to meaningfully bound the value of $\Delta_n$. However, a critical benefit to our dynamically optimized update steps is that $\Delta_n$ is dynamic as well. While the error term in OBL is monotonically nondecreasing with each iteration, our $\Delta_n$ is not necessarily monotone. In practice, it is not uncommon for \eqref{Eqn:SOCP} to place heavy weight in the $\Wcoef$ variables and minimal weight on the $\Ucoef$ variables at a particular iteration. This effectively resets the accumulated error $\Delta' \approx 0$ and sets $\Delta_n \approx \delta_n$. Thus, at points in our algorithm, we can obtain nearly tight guarantees when the value of $\Delta_N$ ``resets'' (See an example in \cref{Fig:DetailPlot_LogReg}).
\end{remark}

\paragraph{Minimax Optimality and Subgame Perfection without Null Steps.} Stronger theoretical guarantees can be stated for BSPGM if one restricts to first-order oracles where no null steps occur. Formally, denote a first-order oracle by a mapping $O \colon \{(x_i,f_i,g_i)\}_{i=0}^{n-1}\times \{x_n\}\mapsto (f_n,g_n)$ sending each possible $n$th iterate to a response and $O \colon \{(x_i,f_i,g_i)\}_{i=0}^{N} \mapsto (x_\star,f_\star,g_\star)$ with $g_\star=0$. Let $\mathcal{O}_{L}$ denote the set of all oracles satisfying $Q_{i,j}(L)\geq 0$ holds for every $i<j$ and $W_{\star, i} \geq 0$ for every $i$. The use of such oracles follows the ideas of~\cite{OBL}, where they considered oracles only required to satisfy a subset of the typical inequalities in smooth convex optimization.

In such settings, the dynamic optimization of BSPGM's induction ensures that its $\tau_n$ sequence grows at each iteration at least as fast as the sequence generated by the static OBL update. Then one can generate a worst-case final convergence rate for BSPGM at any iteration $n$ by considering the final $\tau_N$ produced by $N-n$ subsequent iterations of OBL (instead of BSPGM), continuing the induction. Denote the hypothetical sequence of $\tau$ produced by this procedure by $\tau_{n,n}=\tau_n$ and $\tau_{n,i}$ for $i>n$ defined recursively via~\eqref{Eqn:StandardOBLUpdate}. Hence, we have the following sequence of dynamic upper bounds on BSPGM's performance. The proof is deferred to \cref{SubSec:DynamicProof}. 

\begin{theorem} \label{Thm:DynamicGuarantees}
    Consider any oracle $O\in\mathcal{O}_L$ and let $\{(x_i,f_i,g_i,\tau_i,z_{i+1})\}_{i=0}^N$ denote the results of running BSPGM with $L_0=L$. Then one has the sequence of dynamic guarantees
    \begin{equation*}
    \frac{f_N - f_\star}{\frac{L}{2}\|x_0-x_\star\|^2_B} \leq \frac{1}{\tau_{N,N}} \leq \frac{1}{\tau_{N-1,N}} \leq \dots \leq \frac{1}{\tau_{1,N}} \leq \frac{1}{\tau_{0,N}} = \frac{2}{N(N+1) + \sqrt{2N(N+1)} }.
    \end{equation*}
\end{theorem}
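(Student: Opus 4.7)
The plan is to prove the chain from right to left in three natural pieces. The first (leftmost) inequality comes from applying \cref{Thm:ConvRate} once we confirm no null steps occur; the middle chain is a monotonicity statement that follows from a subgame perfect domination argument comparing BSPGM to a continuation via static OBL; and the closed form for $\tau_{0,N}$ is a direct induction on the OBL recurrence.

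For the leftmost inequality, I would first observe that any oracle $O\in\mathcal{O}_L$ satisfies $Q_{i,j}(L)\geq 0$ for all $i<j$, equivalently $\tilde{L}_B(x_i,x_j)\leq L$. Initializing $L_0=L$ therefore ensures the null-step condition $\tilde{L}_B(x_m,x_n)>L_n$ in \cref{Alg:BaseAlg} is never triggered. Consequently $L_n=L$, $\delta_n=0$, and $\Delta_n=0$ throughout, and every $\tau_n>0$. Applying \cref{Thm:ConvRate} at $n=N$ then yields the desired bound, since $\tau_{N,N}=\tau_N$.

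Next, for the monotonicity step $\tau_{n,N}\geq \tau_{n-1,N}$, the core is to show $\tau_n=\tau_{n,n}\geq \tau_{n-1,n}$. At iteration $n$ the SOCP \eqref{Eqn:SOCP} admits the feasible point $\Ucoef=e_{n-1},\Wcoef=0$, which corresponds to reusing the previous inductive hypothesis $U_{n-1}$ and attains objective value $\tau_{n-1}$. By \cref{Lem:Feasibility}(ii), BSPGM's actual optimum therefore satisfies $\tau'\geq \tau_{n-1}$. Feeding $\tau'$ into \texttt{OBL\_Update} (\cref{Alg:OBLUpdate}) and using that both $\hat\tau\mapsto \hat\tau+\frac{1+\sqrt{1+8\hat\tau}}{2}$ and the final-step variant $\hat\tau\mapsto \hat\tau+\sqrt{\hat\tau}$ are strictly increasing in $\hat\tau$ yields $\tau_n\geq \tau_{n-1,n}$. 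For $i>n$, both $\tau_{n,i}$ and $\tau_{n-1,i}$ evolve under the same monotone OBL rules, so the inequality propagates by induction to $i=N$.

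Finally, the explicit formula for $\tau_{0,N}$ is a routine induction. Starting from $\tau_{0,0}=1$, the non-final OBL update preserves $\tau_{0,i}=\frac{(i+1)(i+2)}{2}$ because $1+8\tau_{0,i}=(2i+3)^2$ is a perfect square; applying the final-step variant at $i=N$ then produces $\tau_{0,N}=\frac{N(N+1)+\sqrt{2N(N+1)}}{2}$, matching the stated right-hand side. The main obstacle is really the middle piece: one must recognize that BSPGM's SOCP explicitly contains OBL's static update as a feasible trial point $(\Ucoef,\Wcoef)=(e_{n-1},0)$, so optimality of $\tau'$ via \cref{Lem:Feasibility}(ii) combined with monotonicity of \texttt{OBL\_Update} automatically delivers domination; everything else is bookkeeping.
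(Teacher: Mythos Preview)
Your proposal is correct and takes essentially the same approach as the paper: both argue that no null steps occur under $\mathcal{O}_L$ so \cref{Thm:ConvRate} gives the leftmost inequality with $\Delta_N=0$, then use $\tau'\geq\tau_{n-1}$ (via \cref{Lem:Feasibility}(ii), which is exactly your feasibility observation $(\rho,\gamma)=(e_{n-1},0)$) together with monotonicity of the OBL update to get $\tau_{n,N}\geq\tau_{n-1,N}$, and finally compute $\tau_{0,N}$ explicitly (the paper simply cites \cite[Theorem~4]{OBL} for this last piece). The only minor omission is the edge case where \eqref{Eqn:SOCP} is unbounded, which the paper disposes of via \cref{Lem:Feasibility}(iii); you should note that in that event the algorithm terminates at a minimizer and the bound holds trivially.
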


The final bound above is minimax optimal among all gradient-span methods for minimization against such an oracle and is also attained by the simpler OBL method, see~\cite[Theorem 4]{OBL}. That is, letting $\mathcal{A}$ denote the set of gradient span methods producing a final iterate after $N$ iterations,
\begin{equation*}
    \min_{A\in\mathcal{A}} \max_{O\in\mathcal{O}_L} \frac{f_N - f_\star}{\frac{L}{2}\|x_0-x_\star\|^2_B} = \frac{2}{N(N+1) + \sqrt{2N(N+1)} }.
\end{equation*}
Beyond this minimax optimality, BSPGM is subgame perfect against such oracles. Given a history $\Mem = \{(x_i,f_i,g_i,\tau_i,z_{i+1})\}_{i=0}^{n-1}$ produced by BSPGM, let $\mathcal{O}^\mathcal{H}_L$ denote the subclass of oracles in $\mathcal{O}_L$ that agree with the history produced by the first $n-1$ iterations and $\mathcal{A}^\mathcal{H}$ denote the subset of algorithms producing the same first $n-1$ iterations. For any observed intermediate history, BSPGM provides a minimax optimal algorithm for the remaining subgame. The proof is deferred to \cref{SubSec:SubgamePerfectProof}.
\begin{theorem} \label{Thm:SubgamePerfect}
    For any history $\mathcal{H} = \{(x_i,f_i,g_i,\tau_i,z_{i+1})\}_{i=0}^{n-1}$ observed by BSPGM with $L=L_0$ and memory $k\geq n$ up to iteration $n\leq N$, we have
    $$ \min_{A\in\mathcal{A}^\mathcal{H}} \max_{O\in\mathcal{O}^\mathcal{H}_L} \frac{f_N - f_\star}{\frac{L}{2}\|x_0-x_\star\|^2_B} = \frac{1}{\tau_{n,N}}. $$
\end{theorem}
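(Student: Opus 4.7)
The plan is to prove both directions of the min--max equality: an upper bound showing BSPGM (which itself lies in $\mathcal{A}^\mathcal{H}$) achieves worst-case performance at most $1/\tau_{n,N}$, and a matching lower bound constructing an adversarial oracle in $\mathcal{O}^\mathcal{H}_L$ that forces any algorithm in $\mathcal{A}^\mathcal{H}$ to an objective gap at least this large. Together these pinpoint the minimax value at $1/\tau_{n,N}$.

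For the upper bound, I would replay the dynamic-domination argument underlying Theorem~\ref{Thm:DynamicGuarantees}, but starting from iteration $n$ rather than from $0$. Since the memory budget $k \geq n$ retains all of $\mathcal{H}$, at each subsequent iteration $i \geq n+1$ the static OBL choice $\Ucoef = e_{i-1}$, $\Wcoef = 0$ with $\epsilon = 0$ is feasible in the SOCP~\eqref{Eqn:SOCP} and recovers precisely the value $\tau_{i-1}$ that OBL would pass into $\texttt{OBL\_Update}$. By induction on $i$, BSPGM's continued iterates therefore satisfy $\tau_i \geq \tau_{n,i}$, and in particular $\tau_N \geq \tau_{n,N}$. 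Since $L_0 = L$ is a valid global smoothness constant for every oracle in $\mathcal{O}^\mathcal{H}_L$, every $Q_{i-1,i}(L) \geq 0$ holds so no null step ever fires and $\Delta_N = 0$; combining with the final-step guarantee~\eqref{Eqn:ConvRateN} yields
$$ \frac{f_N - f_\star}{\tfrac{L}{2}\|x_0-x_\star\|^2_B} \leq \frac{1}{\tau_N} \leq \frac{1}{\tau_{n,N}} $$
uniformly over $\mathcal{O}^\mathcal{H}_L$, which gives the $\leq$ direction.

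For the lower bound, I would adapt the adversarial construction behind OBL's minimax optimality~\cite[Theorem 4]{OBL} to play against an arbitrary $A \in \mathcal{A}^\mathcal{H}$ over the remaining rounds $n, n+1, \dots, N, \star$. In that construction, the adversary reveals gradients spanning a fresh orthogonal direction at each round and ultimately exhibits a worst-case convex function whose inequalities $Q_{i-1,i}(L) \geq 0$ and $W_{\star,i} \geq 0$ are tight in exactly the pattern needed to saturate OBL's $\tau$-recursion. Applied to the subgame initialized at iteration $n$ with $\tau_{n,n} = \tau_n$, the same recursion evolves through $\tau_{n,n+1}, \dots, \tau_{n,N}$ and yields a terminal objective gap of exactly $(\tfrac{L}{2}\|x_0-x_\star\|^2_B) / \tau_{n,N}$ against every gradient-span algorithm.

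The main obstacle is verifying that this adversarial continuation remains consistent with the fixed history, i.e., that the resulting oracle still lies in $\mathcal{O}^\mathcal{H}_L$ once the cross-history constraints $Q_{i,j}(L) \geq 0$ and $W_{\star,i} \geq 0$ linking $i < n$ with $j \geq n$ are imposed. Two facts should make this go through: $\mathcal{H}$ was itself produced by some oracle in $\mathcal{O}^\mathcal{H}_L$, so all intra-history inequalities already hold; and the OBL adversary has enough geometric freedom to place its future iterates and the minimizer $x_\star$ into a subspace orthogonal to the affine hull of $\mathcal{H}$, which decouples the cross-history inequalities and allows them to be satisfied by an additive correction to the worst-case function. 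Verifying this orthogonal-placement argument and checking that the required inequalities survive the correction is where the bulk of the technical work lies; once done, matching the upper bound proves subgame perfection.
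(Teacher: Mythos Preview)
Your upper-bound argument is fine and matches the paper's use of Theorem~\ref{Thm:DynamicGuarantees}. The gap is in the lower bound: the proposal to take the OBL adversary, place future data in an orthogonal subspace, and then patch the cross-history constraints by an ``additive correction'' does not work as stated, and it misses the central mechanism the paper actually uses.

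The difficulty is that the history-compatibility constraints are not merely geometric. For $i<n$ you must simultaneously satisfy $W_{\star,i}=f_\star-f_i-\langle g_i,x_\star-x_i\rangle\geq 0$ and $Q_{i,j}(L)\geq 0$ for $j\geq n$, while also choosing $f_\star$ and $x_\star$ so that the terminal ratio is exactly $1/\tau_{n,N}$. Orthogonality of the future gradients reduces $W_{\star,i}$ to $f_\star-f_i-\langle g_i,z'-x_i\rangle$, which still couples $f_\star$ and $z'$ to every historical data point; shifting $f_\star$ or $x_\star$ to repair these inequalities moves the ratio off $1/\tau_{n,N}$. The paper resolves this tension by taking the \emph{dual} of the subproblem~\eqref{Eqn:SOCP} at iteration $n$: an optimal dual pair $(\xi,y)$ determines $f_\star=v_m-1/\xi$ and $z'=-2y/(\xi L)$, and the dual feasibility constraints are literally the inequalities $W_{\star,i}\geq 0$ and $U_i\geq 0$ for $i<n$ (see \eqref{Eqn:DualFeas1}--\eqref{Eqn:DualFeas2}). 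Strong duality then forces $\phi_n=\tfrac{\xi L}{2}\|z'-x_0\|^2$, which is precisely what makes $U'=0$ and launches the tight OBL-style recursion $\tau_{n,i}$ from the correct starting value. Without this duality step, you have no principled way to select $f_\star$ and $x_\star$ so that the construction is both history-consistent and tight.
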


\subsection{Adaptive Restart} \label{SubSec:AdRestart}
Restarting is a well-known effective strategy for attaining linear rates for smooth, strongly convex problems \cite{Restart_Nemirovskii,AdaptiveRestart_ODonoghue}.  We apply this approach to improve the performance of our ultimate ASPGM, adaptively restarting an inner loop of the previously introduced BSPGM in \cref{Alg:BaseAlg}. Provided a strong convexity constant $\mu$ with respect to $\langle \cdot,\cdot \rangle_B$ is known, the dynamic induction underlying BSPGM provides a provably-good dynamic restarting condition, stated below. After developing this restart condition, we briefly introduce a heuristic for estimating $\mu$ with respect to $B$ used in our final, parameter-free method, similar in spirit to the approach of~\cite{NAGFree_Cavalcanti}.

Going forward, we will refer to each inner loop of \cref{Alg:BaseAlg} between restarts as an epoch, and we will use $\epoch{x_i}$, $\epoch{g_i}$, etc. to refer to the values of each variable at the $i$th iteration of the $\idxEpoch$th epoch.
Suppose for the sake of our theoretical development that in epoch $\ell$, a strong convexity constant $\epoch{\mu}>0$ for $f$ with respect to $\langle \cdot,\cdot\rangle_{\epoch{B}}$ is known.
Our restart condition to be checked at each serious step is
\begin{equation} \label{Eqn:RestartCondition}
    \tau_n \geq \frac{2 \epoch{L}_n}{\epoch{\mu}} + \frac{2 \epoch{\Delta}_n}{f(\epoch{x}_0) - f(\epoch{x}_n)}, \qquad\qquad f(\epoch{x}_0) - f(\epoch{x}_n) >0. 
\end{equation}
Once \eqref{Eqn:RestartCondition} is satisfied, the algorithm restarts after the next ``serious step'', taken using the final step version of the OBL update (\cref{Alg:OBLUpdate}). To restart our method, we initialize a new instance of \cref{Alg:BaseAlg} at the current iterate $\epoch{x}_n$, with $x_0^{(\idxEpoch+1)} \leftarrow \epoch{x}_n$ and some $\epoch{L}_0$ and $\epoch{B}$. 

Whenever \eqref{Eqn:RestartCondition} is satisfied by the inner loop executing \cref{Alg:BaseAlg}, the objective gap will have been reduced by at least a factor of two. Hence, by restarting the algorithm at this point (technically, at the next serious step), only a logarithmic number of restarts will be required to reach any given target accuracy. The following theorem formally states this contraction guarantee and a constant bound on the number of iterations needed to attain~\eqref{Eqn:RestartCondition}, with proof deferred to \cref{SubSec:ConvAssumingSCProof}. 

\begin{theorem} \label{Thm:ConvergenceAssumingSC}
    Suppose that $f$ is $\mu$-strongly convex with respect to $\langle \cdot,\cdot\rangle_B$. Further suppose that at some iteration $n$ of \cref{Alg:BaseAlg}, the restart condition \eqref{Eqn:RestartCondition} is satisfied. If $\epoch{x}_{n+1}$ is a serious step and $\epoch{\Delta}_{n+1} \leq \epoch{\Delta}_n$, then
    \begin{equation} \label{Eqn:AdaptiveConvergenceRate}
        f(\epoch{x}_{n+1}) - f(x_\star)\leq \frac{1}{2}\left(f(\epoch{x}_0) - f(x_\star)\right).
    \end{equation}
    Further, the restart condition~\eqref{Eqn:RestartCondition} must be attained whenever
    \begin{equation*}
    n \geq \sqrt{\frac{4 \epoch{L}_n}{\mu} + \frac{4 \epoch{\Delta}_n}{f(\epoch{x}_0) - f(x_\star)} } + \log_2\left(\frac{\epoch{L}_n}{\epoch{L_0}}\right).
    \end{equation*}
\end{theorem}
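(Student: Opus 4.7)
The plan is to establish the contraction bound~\eqref{Eqn:AdaptiveConvergenceRate} by applying \cref{Thm:ConvRate} to the final-step serious iterate $\epoch{x}_{n+1}$, and then separately derive the iteration bound by tracking the growth of $\epoch{\tau}_n$ across serious and null steps.

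For the contraction, I would start from the final-step version of \cref{Thm:ConvRate} applied at iteration $n+1$,
\begin{equation*}
    f(\epoch{x}_{n+1}) - f(x_\star) \leq \frac{\epoch{L}_{n+1}\normB{\epoch{x}_0 - x_\star}^2 + \epoch{\Delta}_{n+1}}{2\epoch{\tau}_{n+1}},
\end{equation*}
and bound $\normB{\epoch{x}_0 - x_\star}^2 \leq \tfrac{2}{\mu}(f(\epoch{x}_0) - f(x_\star))$ via $\mu$-strong convexity. Since the restart condition~\eqref{Eqn:RestartCondition} requires $\epoch{\tau}_n > 0$, iteration $n$ is a serious step and $\idxLastSuccess = n$; so by \cref{Lem:Feasibility}(ii) the SOCP~\eqref{Eqn:SOCP} at iteration $n+1$ returns $\tau' \geq \epoch{\tau}_n$, and the final-step \texttt{OBL\_Update} formula yields $\epoch{\tau}_{n+1} = \tau' + \sqrt{\tau'} \geq \epoch{\tau}_n + \sqrt{\epoch{\tau}_n}$. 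With $\epoch{L}_{n+1} = \epoch{L}_n$ (no null-step inflation at $n$) and $\epoch{\Delta}_{n+1} \leq \epoch{\Delta}_n$ by hypothesis, the contraction reduces to the algebraic inequality
\begin{equation*}
    \epoch{\tau}_n + \sqrt{\epoch{\tau}_n} \geq \frac{2\epoch{L}_n}{\mu} + \frac{\epoch{\Delta}_n}{f(\epoch{x}_0) - f(x_\star)},
\end{equation*}
which follows from the restart condition together with $f(\epoch{x}_0) - f(\epoch{x}_n) \leq f(\epoch{x}_0) - f(x_\star)$.

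For the iteration bound, I would split the first $n$ iterations into $N_n$ null steps and $N_s = n - N_n$ serious steps. Each null step at least doubles $\epoch{L}$, giving $N_n \leq \log_2(\epoch{L}_n/\epoch{L}_0)$. Using \cref{Lem:Feasibility}(ii) with the non-final \texttt{OBL\_Update} formula, a routine induction yields $\epoch{\tau}_n \geq N_s^2/2$. The assumed lower bound on $n$ then produces $\epoch{\tau}_n \geq \tfrac{4\epoch{L}_n}{\mu} + \tfrac{2\epoch{L}_n \epoch{\Delta}_n}{f(\epoch{x}_0) - f(x_\star)}$. To finish, I would invoke \cref{Thm:ConvRate} combined with strong convexity to conclude $f(\epoch{x}_n) - f(x_\star) \leq \tfrac{1}{2}(f(\epoch{x}_0) - f(x_\star))$, whence $f(\epoch{x}_0) - f(\epoch{x}_n) \geq \tfrac{1}{2}(f(\epoch{x}_0) - f(x_\star))$ and hence $\tfrac{\epoch{L}_n \epoch{\Delta}_n}{f(\epoch{x}_0) - f(\epoch{x}_n)} \leq \tfrac{2\epoch{L}_n \epoch{\Delta}_n}{f(\epoch{x}_0) - f(x_\star)}$, verifying the restart condition.

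The main obstacle I anticipate is the mismatch between the restart condition (whose denominator is the current progress $f(\epoch{x}_0) - f(\epoch{x}_n)$) and the iteration bound (whose denominator is the fixed initial gap $f(\epoch{x}_0) - f(x_\star)$). Bridging them requires feeding the algorithm's own convergence guarantee back into the analysis to argue that $\epoch{x}_n$ has already closed at least half the initial gap once $\epoch{\tau}_n$ is large enough, which is precisely what the extra factor of two in the iteration bound is designed to enable. A minor secondary subtlety is that the non-final version of \cref{Thm:ConvRate} has an extra $-\tfrac{1}{2\epoch{L}_n}\normB{\epoch{g}_n}^2$ term on its LHS, which must be accounted for when converting it into a direct upper bound on $f(\epoch{x}_n) - f(x_\star)$.
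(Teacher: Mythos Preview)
Your contraction argument is essentially the paper's: both apply \cref{Thm:ConvRate} at the final step $n+1$, bound $\normB{\epoch{x}_0-x_\star}^2$ via strong convexity, pass from $\epoch{\tau}_{n+1}$ down to $\epoch{\tau}_n$ using \cref{Lem:Feasibility}, and replace $f(\epoch{x}_0)-f(\epoch{x}_n)$ in the restart threshold by the larger $f(\epoch{x}_0)-f(x_\star)$. The paper only uses $\epoch{\tau}_{n+1}\geq\epoch{\tau}_n$; your sharper $\epoch{\tau}_{n+1}\geq\epoch{\tau}_n+\sqrt{\epoch{\tau}_n}$ is harmless but unnecessary.

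For the iteration bound there is an arithmetic slip that breaks your feedback argument. From $\epoch{\tau}_n\geq N_s^2/2$ and the hypothesis $N_s\geq n-\log_2(\epoch{L}_n/\epoch{L}_0)\geq\sqrt{\tfrac{4\epoch{L}_n}{\mu}+\tfrac{2\epoch{L}_n\epoch{\Delta}_n}{f(\epoch{x}_0)-f(x_\star)}}$, squaring and halving gives only
\[
\epoch{\tau}_n\;\geq\;\frac{2\epoch{L}_n}{\mu}+\frac{\epoch{L}_n\epoch{\Delta}_n}{f(\epoch{x}_0)-f(x_\star)},
\]
not the doubled quantity you claim. This removes precisely the ``extra factor of two'' headroom you were relying on: even granting $f(\epoch{x}_0)-f(\epoch{x}_n)\geq\tfrac12\bigl(f(\epoch{x}_0)-f(x_\star)\bigr)$, the restart condition~\eqref{Eqn:RestartCondition} would then demand $\epoch{\tau}_n\geq\tfrac{2\epoch{L}_n}{\mu}+\tfrac{2\epoch{L}_n\epoch{\Delta}_n}{f(\epoch{x}_0)-f(x_\star)}$, a factor of two beyond what you actually have on the $\Delta$-term. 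On top of this, the $-\tfrac{1}{2\epoch{L}_n}\normB{\epoch{g}_n}^2$ term you flag is left unresolved, so the intermediate step ``$f(\epoch{x}_n)-f(x_\star)\leq\tfrac12(f(\epoch{x}_0)-f(x_\star))$'' is not established either.

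By contrast, the paper's treatment of the iteration bound is much terser and does \emph{not} attempt your feedback loop: it simply records $\epoch{\tau}_n\geq\tfrac12\bigl(n-\log_2(\epoch{L}_n/\epoch{L}_0)\bigr)^2$ and inverts this with the threshold written directly using $f(\epoch{x}_0)-f(x_\star)$ in the denominator. Your instinct that the denominators in~\eqref{Eqn:RestartCondition} and in the iteration bound do not literally match is correct; the paper sidesteps rather than bridges this, whereas your attempted bridge fails for the arithmetic reason above.
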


\begin{remark}
    In the case that $\epoch{\Delta}_{n+1} > \epoch{\Delta}_n$ or the next serious step does not occur until $\epoch{x}_{n+r}$ for some $r>1$, we cannot guarantee the exact contraction in \eqref{Eqn:AdaptiveConvergenceRate}, since $\epoch{\tau}_{n+r}$ may no longer satisfy \eqref{Eqn:RestartCondition} due to increases in $\epoch{\Delta}_{n+r}$ and $\epoch{L}_{n+r}$. However, in such cases, we expect a contraction approximately matching \eqref{Eqn:AdaptiveConvergenceRate}, which was sufficient in our numerical evaluations.
\end{remark}

In practice, accurately estimating $\epoch{\mu}$ (at each epoch) can be difficult and lead algorithms to be overly conservative. Numerically, we found strong practical performance by constructing an online estimate of $\mu$ similar to the approach of~\cite{NAGFree_Cavalcanti}. We initially set $\epoch{\mu}_0=\infty$ and update at each iteration according to
\begin{equation} \label{Eqn:MuCalculation}
    \epoch{\mu}_n = \min\{ \epoch{\mu}_{n-1}, \ \tilde{\mu}_{\epoch{B}}(\epoch{x}_m, \epoch{x}_n) \}.
\end{equation}
Then our ASPGM algorithm triggers a restart whenever~\eqref{Eqn:RestartCondition} holds with $\epoch{\mu}=\epoch{\mu}_n$.

\begin{remark}
    As additional heuristics, our implementation requires that each epoch perform a minimum number of iterations before restarting and forces a restart after a maximum number of iterations. This can prevent premature or delayed restarts due to a poor initial estimate of $\epoch{\mu}$ or $\epoch{L}$, respectively. In practice, we found that with a minimum iteration requirement of $20$ and maximum iteration setting of 100, performance was slightly improved.
\end{remark}

\subsection{Preconditioning} \label{SubSec:Precon}

In quasi-Newton methods, one uses an inverse Hessian approximation as a preconditioner to a gradient descent step. In L-BFGS, given a memory size $t$ of points $x_i$ and corresponding gradients $g_i$, the inverse Hessian approximation and Hessian approximation are formed recursively as follows. Letting $B_1 = B_1^{-1} = I$, the BFGS update sets
\begin{equation} \label{Eqn:BFGS_Formula}
    B_{i+1} = (I - \frac{s_i y_i^T}{y_i^T s_i}) B_i (I-\frac{y_i s_i^T}{y_i^T s_i}) +  \frac{s_i s_i^T}{y_i^T s_i},
    \qquad\qquad
    B_{i+1}^{-1} = B_i^{-1} - \frac{B_i^{-1} s_i s_i^T B_i^{-1}}{s_i^T B_i^{-1} s_i} + \frac{y_i y_i^T}{y_i^T s_i}
\end{equation}
where $s_i = x_{i+n-t} - x_{i+n-t-1}$, $y_i = g_{i+n-t} - g_{i+n-t-1}$ for $i=1,\dots,t$. We then define $B = B_{t+1}$ and $B^{-1} = B_{t+1}^{-1}$.

In ASPGM, we use the inverse Hessian approximation, derived from L-BFGS, to precondition our problem. At each restart, we ``construct'' our preconditioner $B$ by saving the last $t$ differences $s_i= x_{i+n-t} - x_{i+n-t-1}$ and $y_i = g_{i+n-t} - g_{i+n-t-1}$ into a separate memory storage, $\{(s_i,y_i)\}_{i = 1}^{t}$. We then define $B$ according to the induction \eqref{Eqn:BFGS_Formula}. With this formula, one can efficiently compute $B v$ and $B^{-1} v$ for any vector $v$, using only vector-vector products; we therefore never have to store $B$ or $B^{-1}$ explicitly. For completeness, we include \cref{Alg:InvHessianMult} and \cref{Alg:HessianMult} in \cref{Sec:Appendix} to show the standard efficient computation of $B v$ and $B^{-1} v$ (both products will be needed since $\langle x,y\rangle_B = \langle x, B^{-1} y \rangle$ and $\gradB f(x) = B \nabla f(x)$).

Finally, we can present our complete algorithm.

\begin{algorithm}[h]
\caption{ASPGM} \label{Alg:ASPGM}
    \SetKwInOut{Input}{Input}
    \SetKwInOut{Output}{Output}
    \Input{ Convex, locally smooth function $f$,  subgame memory size $k \in \mathbb{N}$,\\ preconditioning memory size $t \in \mathbb{N}$, $x_0 \in \R^d$}
    Set $B^{(1)} = I$, $x_0^{(1)} = x_0$ \\
    \For{$\idxEpoch = 1,\dots$}
        {$y = \epoch{x}_0 - 10^{-4} \cdot \frac{\nabla f(\epoch{x}_0)}{\|\nabla f(\epoch{x}_0)\|}$ \\
        $\epoch{L}_0 = \tilde{L}_\epoch{B}(\epoch{x}_0, y)$ \\ 
        $\epoch{x}_n, \Mem = \text{Output of \cref{Alg:BaseAlg}}$ given $k,\epoch{x}_0,\epoch{L}_0,\epoch{B}$\\
        Construct $B^{(\idxEpoch+1)}$ based on the last $t$ steps in $\Mem$ via~\eqref{Eqn:BFGS_Formula} \\
        $x_0^{(\idxEpoch+1)} = \epoch{x}_n$ \\
        }
\end{algorithm}

\begin{remark}
    In total, ASPGM requires the storage of  $3k + 2t$ additional vectors of size $d$ (beyond $x_0$, $x_n$, and $g_n$), where $k$ is the memory size used in the BSPGM subalgorithm, and $t$ is the memory size used for preconditioning. All other storage requirements are negligible at high dimensions. In comparison, the storage requirements of SPGM and L-BFGS are $2k$ and $2t$, respectively.
\end{remark}

    \section{Computational Benchmarks} \label{Sec:Numerics}

In this section, we demonstrate the performance of BSPGM and ASPGM across a wide range of problem classes. In both oracle complexity and wall-clock performance, our methods using modest memory sizes reach state-of-the-art performance in comparison with adaptive first-order methods and quasi-Newton methods.
In \cref{SubSec:AlgDiscussion}, we discuss our implemented versions of the algorithm, along with a large sample of competing algorithms from the literature. In \cref{SubSec:Synth}, we test ASPGM on a set of synthetic, randomly generated problems. Then in \cref{SubSec:RealRegression} and \cref{SubSec:RealOther}, we apply ASPGM to problems derived from real data sets. Finally, in \cref{SubSec:HardProbs}, we examine the performance of ASPGM on several difficult, poorly conditioned problems.

ASPGM is implemented in Julia \cite{julia} and uses MOSEK \cite{mosek} via JuMP \cite{JuMP} to solve the subproblem \eqref{Eqn:SOCP} at each iteration. Our implementation along with code used to run our experiments is available at 
\begin{equation*}
\href{https://github.com/alanluner/ASPGM}{\texttt{github.com/alanluner/ASPGM}.}
\end{equation*}
Experiments were run locally on an Intel i7 processor with 64GB of memory.

\subsection{Algorithm Varieties and Benchmarks} \label{SubSec:AlgDiscussion}

We investigate the performance of three versions of our subgame perfect methods. First, we consider the full method, ASPGM (\cref{Alg:ASPGM}), including adaptive restarting and preconditioning both with fixed memory sizes of $k=t=1$ (\texttt{ASPGM-1-1}) and $k=t=5$ (\texttt{ASPGM-5-5}). Additionally, we consider the core method BSPGM (\cref{Alg:BaseAlg}), with memory $k=7$ (\texttt{BSPGM-7}), which does not implement any adaptive restart or preconditioning.

At the end of our section, we compare in \cref{Fig:MemComparison} the performance of ASPGM with different memory sizes. As expected, we see diminishing returns as the memory size increases. This is consistent with many memory-based methods including SPGM and L-BFGS. These results justify our use of \texttt{ASPGM-5-5} as our default method. At memory size 1, the subproblem \eqref{Eqn:SOCP} can be solved explicitly, eliminating the need for an SOCP solve. This enabled us to provide a stand-alone implementation of \texttt{ASPGM-1-1} in our repository above, with no dependencies on external solvers. Our results show that even with very low memory, \texttt{ASPGM-1-1} generally maintains performance, with wall-clock performance notably benefiting from the highly efficient explicit solve.

\paragraph{Benchmark Algorithms}
We compare the performance of our base BSPGM method and our full ASPGM method with a wide range of competing algorithms. We focus primarily on adaptive methods and thus exclude methods requiring smoothness estimates, like OGM and SPGM, from our survey. Many of these methods require an unspecified initial estimate of $L_0$; we initialize $L_0 = \tilde{L}_I(x_0, x_0 - 10^{-4} \cdot \frac{\nabla f(x_0)}{\|\nabla f(x_0)\|})$. L-BFGS is implemented in Julia using the \texttt{Optim} \cite{Optim} package. All other algorithms are implemented directly in Julia.
\begin{itemize}
    \item \texttt{OBL:} Optimized Backtracking Linesearch \cite{OBL}.
    \item \texttt{UFGM:} Universal Fast Gradient Method \cite{UFGM_Nesterov}.
    \item \texttt{AdaNAG:} AdaNAG-G\textsubscript{12} from \cite[Corollary 7]{AdaNAG_Suh}.
    \item \texttt{ACFGM:} Auto-Conditioned Fast Gradient Method \cite{ACFGM_Li}. We follow the implementation of the authors and set parameters $\eta_1 = 5/(2L_0)$, $\beta=1-\sqrt{6}/3$, and $\alpha=0.1$.
    \item \texttt{AdGD:} Adaptive Gradient Descent \cite{AdGD_Prox2_Malitsky}. We use the improved adaptive gradient descent method from \cite[Algorithm 2]{AdGD_Prox2_Malitsky}.
    \item \texttt{NAGF:} NAG-free algorithm from \cite[Algorithm 2]{NAGFree_Cavalcanti}.
    \item \texttt{OSGMB:} OSGM-Best method from~\cite{OSGMBest}, a variant of Online Scaled Gradient Method~\cite{OnlineScaling_Gao}. We follow the implementation of the authors and set parameters $\omega=0$, $\tau=\frac{1}{2}L_0^2$, $\beta_0 = 0.95$, $\eta_P = \frac{1}{L_0}$, and $\eta_\beta = \min(1.0,L_0)$.
    \item \texttt{LBFGS-BL:} L-BFGS with backtracking linesearch. We use memory size $t=10$.
    \item \texttt{LBFGS-HZ:} L-BFGS with the robust linesearch introduced by Hager and Zhang \cite{HagerZhang}. Note that this is the default option when using L-BFGS in the \texttt{Optim} Julia package. We use memory size $t=10$.
\end{itemize}
Among these, the first seven provide comparisons with adaptive methods that possess strong theoretical support. Our \texttt{ASPGM-1-1} is the most comparable to these in computation/storage per iteration. The latter two L-BFGS methods are state-of-the-art in performance (although lacking non-asymptotic theory) and have similar computational/storage costs to \texttt{ASPGM-5-5} and \texttt{BSPGM-7}. Hence they set the baseline we seek to match.

We employ an oracle model in which function and gradient information are returned simultaneously (i.e., $x \mapsto (f(x), \nabla f(x) )$). Each such evaluation is considered an oracle call in our plots below. For algorithms that consider function oracles and gradient oracles separately, we apply special handling to efficiently leverage the joint first-order oracle.

\subsection{Synthetic Smooth Convex Problems} \label{SubSec:Synth}

We generate random problem instances for several problem classes discussed below, all parameterized by $A \in \R^{p \times d}$, $b \in \R^p$, $c \in \R^p$, and $x_0 \in \R^d$.
For each problem instance we set $p = 4d$, $x_0 = 0$, generate $b$ element-wise from $\mathcal{N}(0,1)$, and generate $c$ element-wise from $\mathcal{U}\{0,1\}$. Since the conditioning of each problem class is determined by the condition number of $A^T A$, we construct $A$ so that $A^T A$ has approximate condition numbers of $\kappa = 10^2$ and $\kappa = 10^4$. We also distribute the singular values of $A$ in two different ways:
\begin{itemize}
    \item Uniform distribution: $\sigma_i \sim \mathcal{U}(1,\sqrt{\kappa})$ for $i=1,\dots, d$
    \item Bimodal distribution: $\sigma_1, \dots, \sigma_{9d/10} \sim \mathcal{U}(1,1.1)$ and $\sigma_{9d/10+1},\dots, \sigma_d \sim \mathcal{U}(0.9 \sqrt{\kappa}, \sqrt{\kappa}).$
\end{itemize}
We generate two instances for each condition number and each distribution above; using dimensions $d=1000,2000,4000,8000$ and 6 problem classes below, this yields 192 synthetic problem instances.

\begin{figure}[h]
    \centering
    \includegraphics[width=1.0\textwidth]{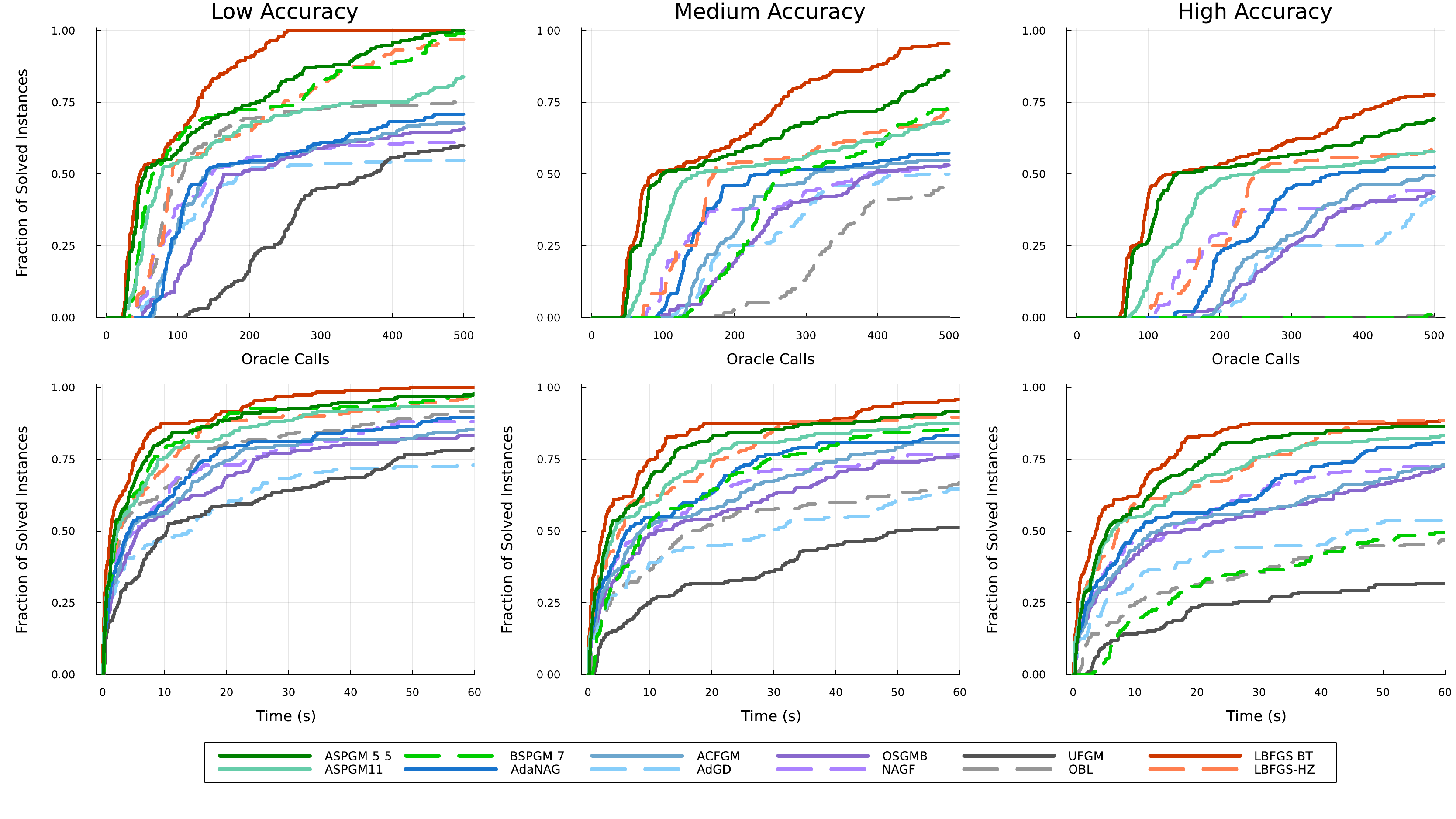}
    \caption{Performance results over synthetic problem instances, in terms of oracle calls and wall clock time. The performance is measured by $(f(x_n) - f_\star)/(f(x_0) - f_\star)$ and the target accuracies from left to right are $10^{-4}, 10^{-7}, 10^{-10}$.}
    \label{Fig:PerformancePlot_Synth}
\end{figure}

In \cref{Fig:PerformancePlot_Synth}, we present the overall performance of ASPGM, BSPGM, and competing methods across all of our problem classes. 
We also include in \cref{SubSec:AdditionalNumerics} these same performance results stratified by problem class. We see ASPGM with both memory size $5$ and a minimal size of $1$ is highly performant on our synthetic problem set, outperforming other adaptive methods, as well as \texttt{LBFGS-HZ}. This advantage is maintained when considering wall clock time. We also note the strong performance of BSPGM for attaining low-accuracy solutions, especially in terms of wall-clock time. This is particularly impressive since BSPGM also provides an explicit non-asymptotic convergence guarantee and corresponding stopping criterion via $L_n/\tau_n$.

We introduce each problem class below.

\paragraph{Regression}
We consider least squares regression and logistic regression with L2 regularization. This amounts to the following objective functions:
\begin{align}
    & f(x) = \frac{1}{2}\|Ax-b\|_2^2 \label{Eqn:Objective_LSReg}, \\
    & f(x) = \sum_{i=1}^p \log(1+\exp(c_i \cdot a_i^T x ) ) + \frac{1}{2p}\|x\|_2^2 \label{Eqn:Objective_LogReg}
\end{align}
where $a_i$ denotes the $i$th row of $A$.

\paragraph{Smoothed Feasibility Problems}
Next, we consider two methods of smoothing approximations of the feasibility problem seeking some $x$ with $A x - b \leq 0$. We define the following objectives,
\begin{align}
    & f(x) = \log \left( 1+ \sum_{i=1}^p \exp(a_i^Tx - b_i) \right), \label{Eqn:Objective_LogSumExp}\\
    & f(x) = \sum_{i=1}^p (a_i^T x - b_i)_+^2 \label{Eqn:Objective_PosSquared}
\end{align}
where $v_+$ denotes the positive part of $v$ i.e., $v_+ = \max\{v, 0\}$ and $a_i$ denotes the $i$th row of $A$.

\paragraph{Locally but not Globally Smooth Functions}

Lastly, we consider convex functions that are locally, but not globally smooth. We define the objective functions
\begin{align}
    & f(x) = \frac{1}{4}\|Ax-b\|_4^4, \label{Eqn:Objective_4Norm}\\
    & f(x) = \frac{1}{2}\|A x\|_2^2 + b^T x + \frac{1}{6p}\|x\|_2^3. \label{Eqn:Objective_CubicReg}
\end{align}

\subsection{Real-Data Regression Problems} \label{SubSec:RealRegression}

We also present performance results for problem instances on real data. Using the LIBSVM \cite{LIBSVM} library, we apply least squares regression \eqref{Eqn:Objective_LSReg} to the data sets \texttt{bodyfat}, \texttt{eunite2001}, \texttt{pyrim}, \texttt{triazines}, and \texttt{yearPredictionMSD} and logistic regression with L2 regularization \eqref{Eqn:Objective_LogReg} to the data sets \texttt{colon-cancer}, \texttt{duke-breast-cancer}, \texttt{gisette}, \texttt{leukemia}, and \texttt{madelon}. Results are shown in \cref{Fig:PerformancePlot_RealRegression}; for clarity we include only a subset of our methods, focusing on those with the strongest performance throughout our experiments. We see similar trends as in our synthetic problems, but see that ASPGM no longer outperforms \texttt{LBFGS-HZ}. We also notice a stronger difference for ASPGM between oracle performance and wall-clock performance. This can be partially attributed to these problems having lower dimension, thereby causing our subproblem to have a larger proportional impact on computation time.
In \cref{Fig:DetailPlot_LogReg}, we show detailed results for the \texttt{duke-breast-cancer} logistic regression problem. Here we show the modified performance measure $(f(x_n) - f_\star)/(\frac{1}{2}\|x_0 - x_\star\|^2)$, so we can directly compare the performance with the guarantees (ignoring error terms) of \texttt{BSPGM-7} and \texttt{OBL} in the center plot. We also show the value of the error term $\Delta_n$ for \texttt{BSPGM-7} and \texttt{OBL}. In particular, this illustrates our dynamic theory allowing $\Delta_n$ to briefly fluctuate and then return to nearly zero for \texttt{BSPGM-7}, unlike \texttt{OBL}.

\begin{figure}
    \centering
    \includegraphics[width=1.0\textwidth]{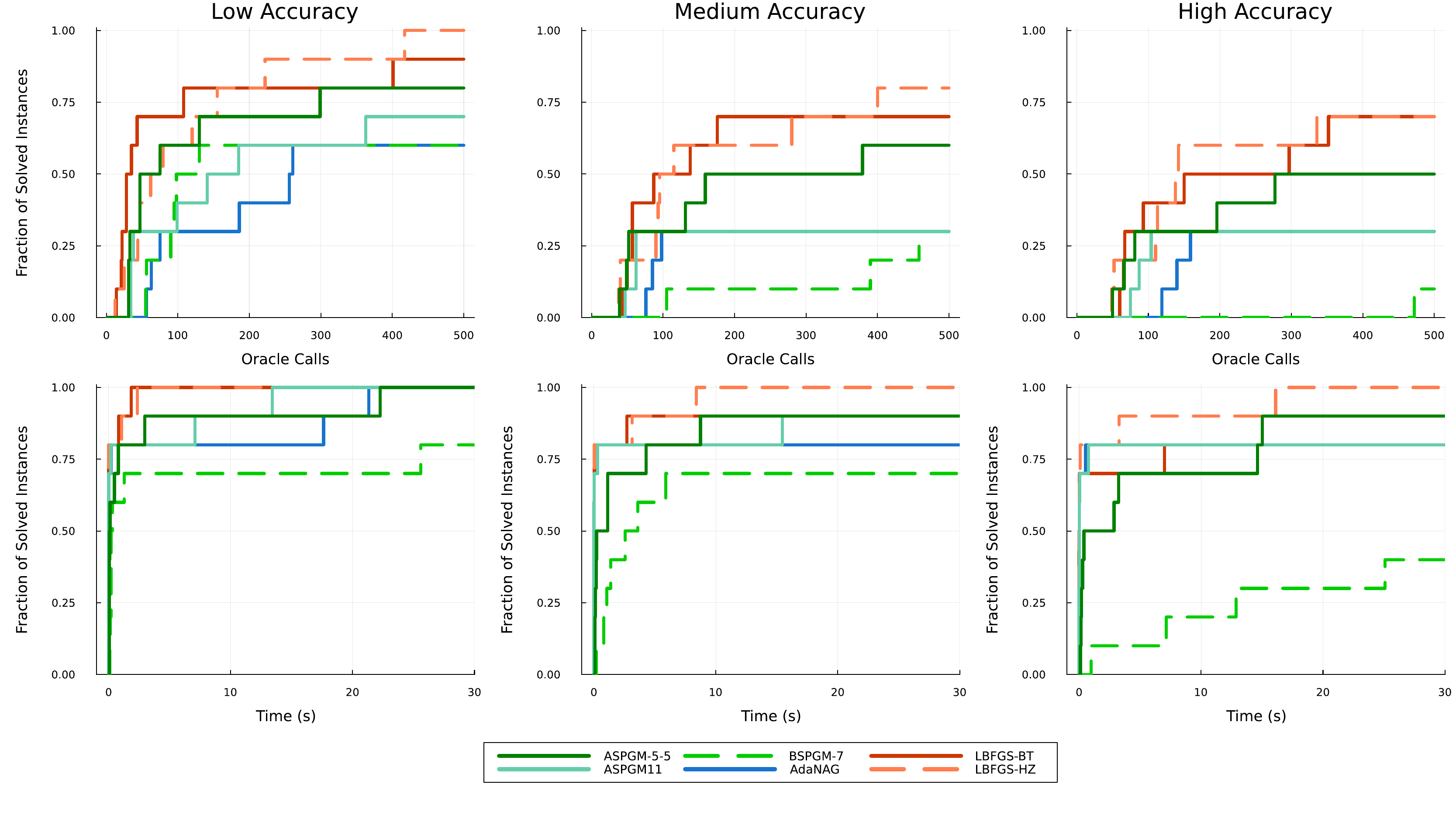}
    \caption{Performance results for least squares regression \eqref{Eqn:Objective_LSReg} and logistic regression \eqref{Eqn:Objective_LogReg} problems from LIBSVM \cite{LIBSVM}. The performance is measured by $(f(x_n) - f_\star)/(f(x_0) - f_\star)$ and the target accuracies from left to right are $10^{-4}, 10^{-7}, 10^{-10}$.}
    \label{Fig:PerformancePlot_RealRegression}
\end{figure}

\begin{figure}
    \centering
    \includegraphics[width=1.0\textwidth]{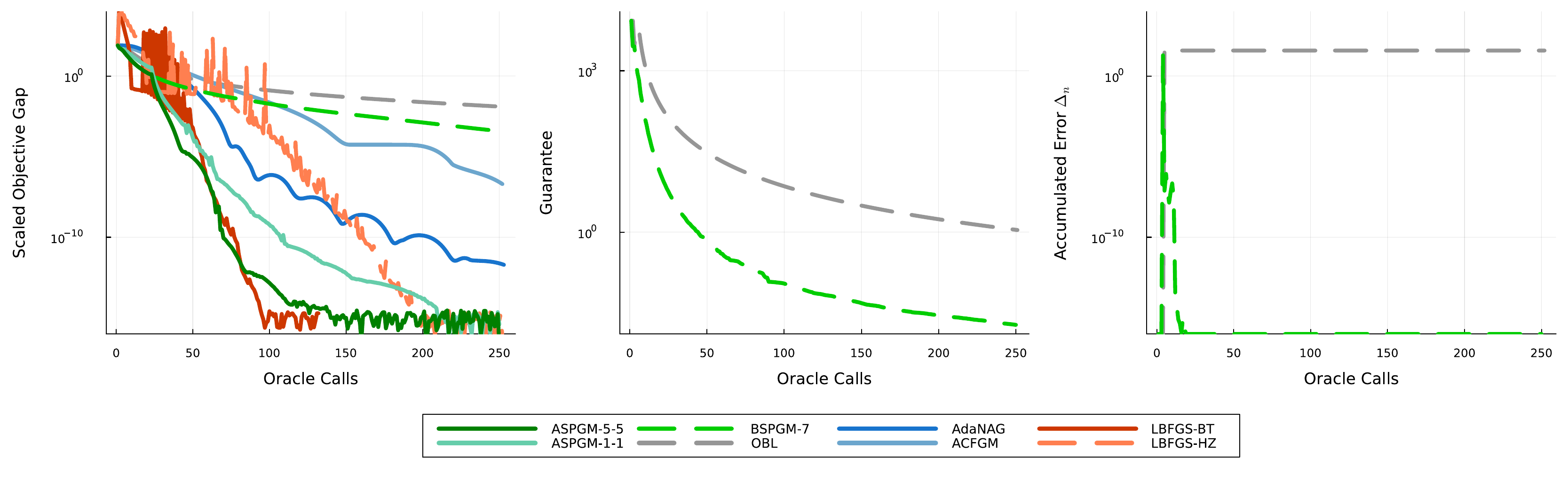}
    \caption{Performance data from logistic regression \eqref{Eqn:Objective_LogReg} run on the \texttt{colon-cancer} dataset. On the left, we show the alternate performance measure $(f(x_n) - f_\star)/(\frac{1}{2}\|x_0 - x_\star\|^2)$. In the middle plot, we compare the guarantees of \texttt{BSPGM-7} and \texttt{OBL} (ignoring error terms). This corresponds to $L_n/ \tau_n$. The right plot shows the accumulated error $\Delta_n$ for \texttt{BSPGM-7} and \texttt{OBL}.}
    \label{Fig:DetailPlot_LogReg}
\end{figure}

\subsection{Real-Data Feasibility Problems} \label{SubSec:RealOther}
As another set of problems derived from real data, we consider the search for a feasible point of an LP.  Given an LP with the constraints $Ax \leq b$, one can frame this objective as minimizing $\max_i \{(Ax-b)_i,0\}$. If we apply either of our smoothing approaches \eqref{Eqn:Objective_LogSumExp} or \eqref{Eqn:Objective_PosSquared} from \cref{SubSec:Synth}, we obtain a smooth problem suitable for our set of algorithms. We apply this method to the following problems from the ``LPfeas Benchmark'' dataset \cite{LPSource}: \texttt{brazil3}, \texttt{chromaticindex1024-7}, \texttt{ex10}, \texttt{graph40-40}, \texttt{qap15}, \texttt{savsched1}, \texttt{scpm1}, \texttt{set-cover-model}, and \texttt{supportcase10}. These include high-dimensional ($d\approx 10^5$) sparse datasets. We apply both smoothing approaches to each data file, obtaining 18 total problem instances. We see similar trends in oracle-call performance but reasonable gains in wall-clock performance.

\begin{figure}
    \centering
    \includegraphics[width=1.0\textwidth]{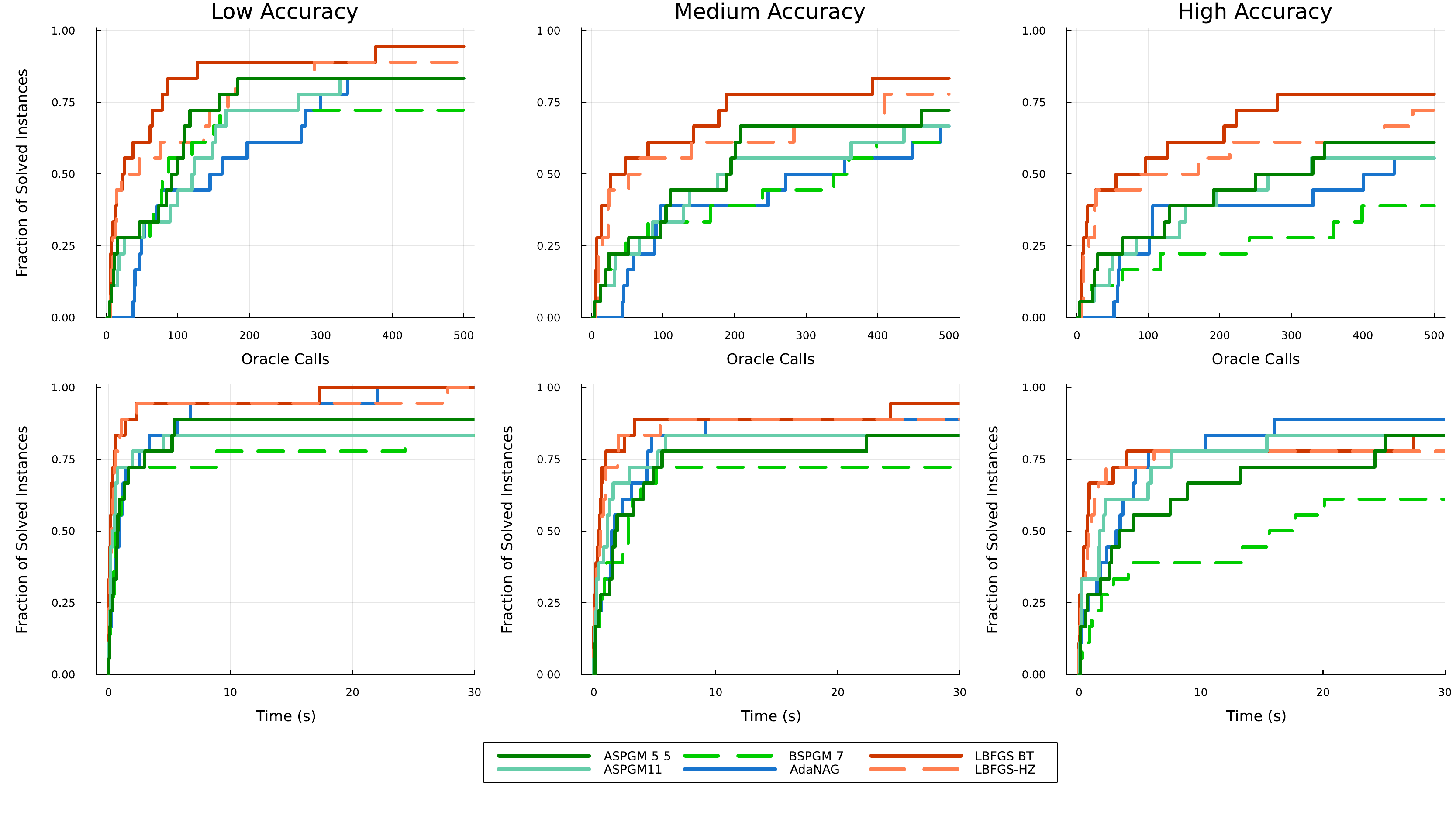}
    \caption{Performance results for feasibility problems over LP instances from \cite{LPSource}. The performance is measured by $(f(x_n) - f_\star)/(f(x_0) - f_\star)$ and the target accuracies from left to right are $10^{-4}, 10^{-7}, 10^{-10}$.}
    \label{Fig:PerformancePlot_LPFeas}
\end{figure}

\subsection{Poorly Conditioned Problems} \label{SubSec:HardProbs}

Lastly, to compare performance in highly degenerate settings, we consider several known poorly conditioned problems from the literature. Each problem is a quadratic of the form $f(x) = \frac{1}{2} x^T A x + b^T x$, with specially selected $A \in \R^{d \times d}$, $b \in \R^d$, and starting point $x_0 \in \R^d$. First, we consider the classic hard instance \cite{NesterovBook} with $x_0 = 0$, $b = (-\frac{1}{2},0,\dots,0)$ and $A$ as the tridiagonal matrix with diagonal $(1,\dots, 1)$ and super- and sub-diagonals $(-\frac{1}{2}, \dots, -\frac{1}{2})$. Next, we consider from \cite{OGMM_Florea}, the case where $b = 0$, $A$ is a diagonal matrix with entries $a_{ii} = \sin^2(\frac{\pi i}{2d})$, and $x_0 = (1/a_{11}, \dots, 1/a_{dd})$. Finally, we consider the instance with $x_0 = 0$, $b = (-1,-2,\dots,-d)$ and $A$ as the diagonal matrix with diagonal $(1^2,2^2,\dots,d^2)$. We refer to these problem instances as \texttt{Problem A}, \texttt{Problem B}, and \texttt{Problem C}, respectively.
In \cref{Fig:HardInstances}, we show our methods remain competitive in these degenerate settings with $d=1000$.

\begin{figure}
    \centering
    \includegraphics[width=1.0\textwidth]{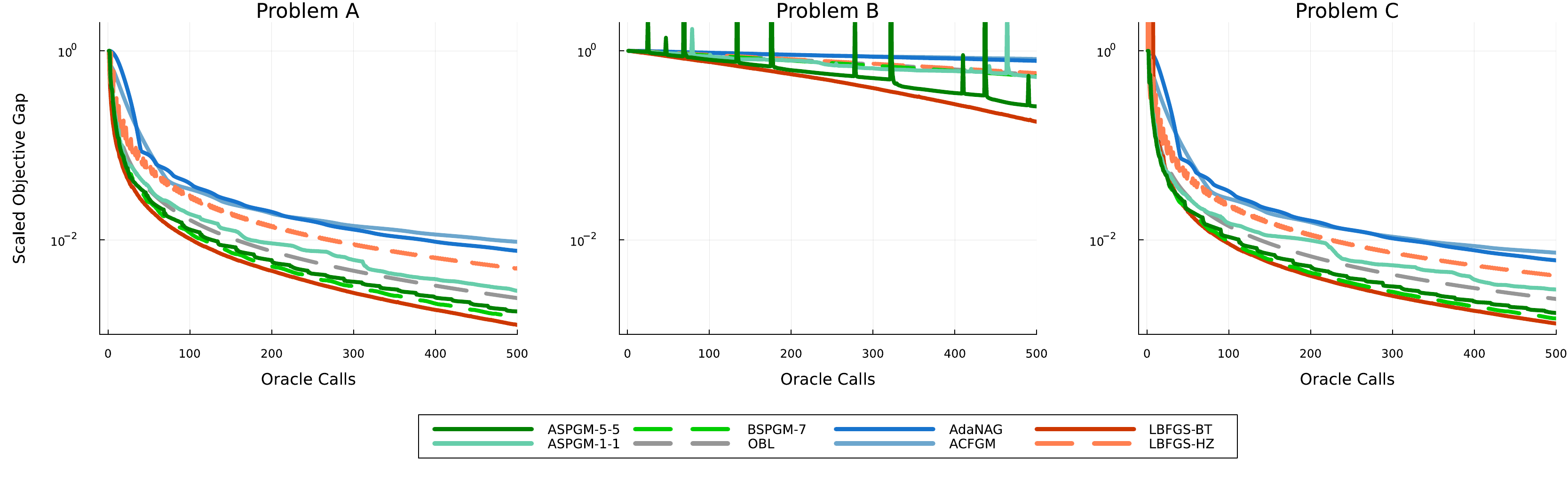}
    \caption{Performance on poorly conditioned problem instances with $d=1000$, as measured by $(f(x_n) - f_\star)/(f(x_0) - f_\star)$.}
    \label{Fig:HardInstances}
\end{figure}

\begin{figure}
    \centering
    \includegraphics[width=1.0\textwidth]{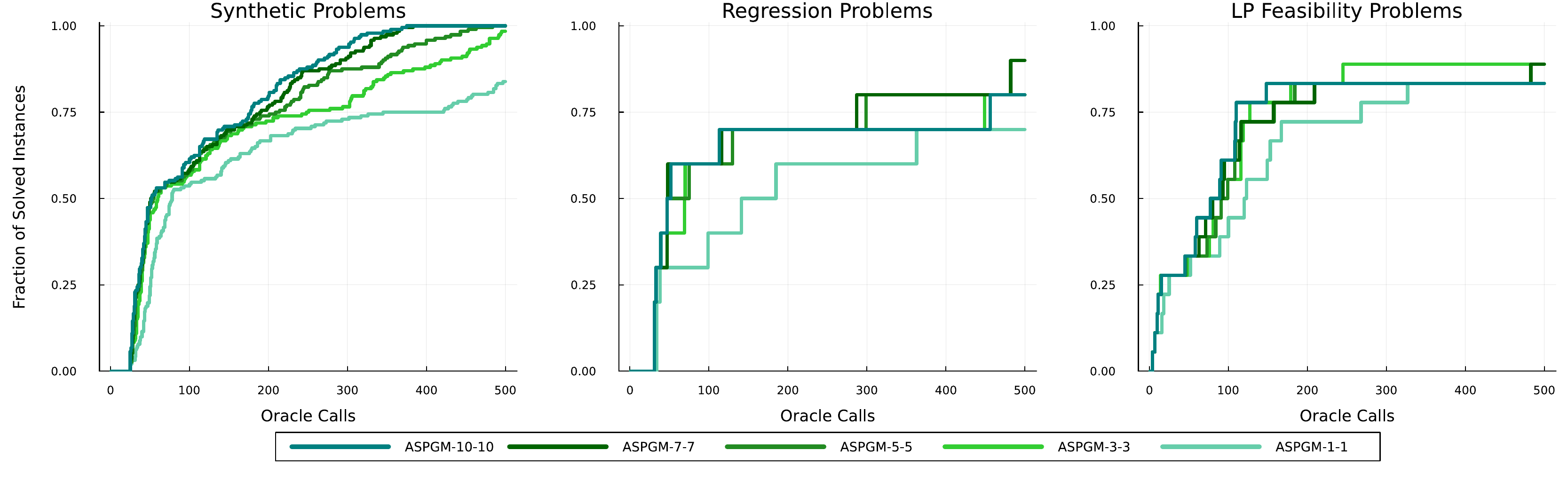}
    \caption{Performance comparison of ASPGM with different memory sizes. From left to right, we show results with performance measured by $(f(x_n) - f_\star)/(f(x_0) - f_\star)$ at accuracy $10^{-7}$, for synthetic problems, regression problems, and LP feasibility problems.}
    \label{Fig:MemComparison}
\end{figure}

    \section{Theory} \label{Sec:Theory}
This section contains proofs of our earlier theoretical results. In \cref{SubSec:FeasibilityProof_Part1} and \cref{SubSec:FeasibilityProof_Part2}, we prove \cref{Lem:Feasibility}. In \cref{SubSec:ConvRateProof}, we prove the convergence rate of our core algorithm BSPGM, stated in \cref{Thm:ConvRate}; this includes a more detailed derivation of the subproblem \eqref{Eqn:SOCP}. \cref{SubSec:DynamicProof} and \cref{SubSec:ConvAssumingSCProof} present proofs of \cref{Thm:DynamicGuarantees} and \cref{Thm:ConvergenceAssumingSC}.
To simplify notation, we will omit the subscript $B$ throughout the section on our inner products and norms, but the analysis is unchanged.

\subsection{Proof of Lemma~\ref{Lem:Feasibility} (i) and (ii)} \label{SubSec:FeasibilityProof_Part1}

Here we prove parts (i) and (ii) separately from part (iii). This is because the true sequence of logic is that (i) and (ii) imply \cref{Thm:ConvRate}, which in turn implies the result of (iii).

We first claim that if \eqref{Eqn:SOCP} is bounded, then we must have $z_{\idxLastSuccess+1} \neq x_0$ or $L_s \neq L_n$. Proving by contrapositive, if $z_{\idxLastSuccess+1} = x_0$ and $L_s = L_n$, then the solution $\rho = c e_\idxLastSuccess$, $\gamma= 0$ is feasible for all $c>0$, as
\begin{align*}
    \epsilon(c e_\idxLastSuccess, 0) &= c a_\idxLastSuccess + \delta_n - \frac{L_n}{2}\|z_{\idxLastSuccess+1} - x_0\|^2 = c \tau_\idxLastSuccess (f_\idxLastSuccess - \frac{1}{2L_\idxLastSuccess}\|g_\idxLastSuccess\|^2 - v_m) + \delta_n \\
    & = c \tau_\idxLastSuccess (v_\idxLastSuccess - v_m) + \delta_n \geq 0
\end{align*}
and thus \eqref{Eqn:SOCP} is unbounded since $\tau' \geq c \rho_\idxLastSuccess \tau_\idxLastSuccess$.

By construction of \cref{Alg:BaseAlg}, we know that $L_n \geq L_i$ for all $i \in [n-k,n-1]$. Let $\kappa = c \frac{L_n}{L_\idxLastSuccess}$ for some $c \in [0,1]$, and take $\Ucoef = \kappa e_\idxLastSuccess$ and $\Wcoef = 0$. We clearly satisfy $\Ucoef, \Wcoef \geq 0$. Then, verifying our main constraint, we have
\begin{align*}
    \epsFunc &= \ip{\Ucoef}{a} + \ip{\Wcoef}{b} +  \delta_n - \frac{L_n}{2} \norm{Z \Ucoef - G \Wcoef}^2 \\
    & = \kappa a_\idxLastSuccess + L_n \tau_\idxLastSuccess\left(\frac{1}{L_\idxLastSuccess^2} - \frac{1}{L_n^2}\right) \frac{1}{2}\norm{g_\idxLastSuccess}^2 - \frac{L_n}{2}\norm{c(z_{\idxLastSuccess+1} - x_0)}^2 \\
    & = \kappa \tau_\idxLastSuccess f_\idxLastSuccess - \kappa \tau_\idxLastSuccess \frac{1}{2L_\idxLastSuccess}\norm{g_\idxLastSuccess}^2 - \frac{\kappa L_\idxLastSuccess}{2}\norm{x_0}^2 + \frac{\kappa L_\idxLastSuccess}{2}\norm{z_{\idxLastSuccess+1}}^2 - L_n c \dotp{z_{\idxLastSuccess+1} - x_0}{x_0} \\
    & \qquad \qquad \qquad - \kappa v_\idxArgmin \tau_\idxLastSuccess + L_n \tau_\idxLastSuccess \left(\frac{1}{L_\idxLastSuccess^2} - \frac{1}{L_n^2}\right) \frac{1}{2}\norm{g_\idxLastSuccess}^2 - \frac{L_n}{2}\norm{c(z_{\idxLastSuccess+1} - x_0)}^2 \\
    & \geq \kappa \tau_\idxLastSuccess f_\idxLastSuccess -  \kappa \tau_\idxLastSuccess \frac{1}{2L_n}\norm{g_\idxLastSuccess}^2 - \kappa \tau_\idxLastSuccess v_\idxArgmin + \tau_\idxLastSuccess \left(\frac{1}{L_\idxLastSuccess} - \frac{1}{L_n} \right) \frac{1}{2}\|g_\idxLastSuccess\|^2 \\
    & \qquad \qquad \qquad + \frac{L_n (c-c^2)}{2} \|z_{\idxLastSuccess+1} - x_0\|^2 \\
    & \geq \kappa \tau_\idxLastSuccess (v_\idxLastSuccess - v_\idxArgmin) + \tau_\idxLastSuccess \left(\frac{1}{L_\idxLastSuccess} - \frac{1}{L_n} \right) \frac{1}{2}\|g_\idxLastSuccess\|^2 + \frac{L_n (c-c^2)}{2} \|z_{\idxLastSuccess+1} - x_0\|^2\\
    & \geq 0
\end{align*}
where the final inequality applies the definition of $\idxArgmin$ in \eqref{Eqn:Def_idx_m}.
This shows feasibility of $(\Ucoef, \Wcoef)$.

Suppose \eqref{Eqn:SOCP} is bounded. Then by our earlier argument either $z_{\idxLastSuccess+1} \neq x_0$ or $L_s \neq L_n$, and the expression above is strictly positive for any $c \in (0,1)$ (unless $g_s = 0$, in which case $x_s \in \argmin f$). This shows there exists a strictly feasible point for \eqref{Eqn:SOCP}, and therefore strong duality holds.
Computing the objective function with our feasible solution and $c=1$, we obtain
\begin{equation*}
    \tau' = \ip{\Ucoef}{\tau} + \fullSum{\Wcoef} = \frac{L_n}{L_\idxLastSuccess} \tau_\idxLastSuccess \geq \tau_\idxLastSuccess .
\end{equation*}
Then we have $\tau' \geq \tau_\idxLastSuccess$. Moreover, our feasible region must be compact, since all terms in our objective function are nonnegative with positive coefficients. Therefore, the optimal value $\tau'$ is attained.

\subsection{Proof of Theorem~\ref{Thm:ConvRate}} \label{SubSec:ConvRateProof}

We recall our relevant definitions:
\begin{align*}
    U_i & = \tau_i\left(f_\star - f_i + \frac{1}{2 L_i}\norm{g_i}^2\right) + \frac{L_i}{2}\norm{x_0-x_\star}^2-\frac{L_i}{2}\norm{z_{i+1}-x_\star}^2 + \Delta_i, \\
    W_{i,j} & = f_i - f_j - \dotp{g_j}{x_i - x_j}, \\
    Q_{i,j}(L) & = f_i - f_j - \dotp{g_j}{x_i - x_j} - \frac{1}{2 L} \norm{g_i - g_j}^2, \\
    v_i & = f_i - \frac{1}{2L_n}\norm{g_i}^2
\end{align*}
and we introduce the notation
\begin{align*}
    h_i & := \tau_i(f_i - \frac{1}{2L_i}\norm{g_i}^2) + \frac{L_i}{2}\norm{z_{i+1}}^2 - \frac{L_i}{2}\norm{x_0}^2, \\
    w_i & := f_i - \dotp{g_i}{x_i}.
\end{align*}
Our algorithm data $\Mem$ is stored as before, and we have the columns of $Z$ given by $\frac{L_i}{L_n}(z_{i+1} - x_0)$ and the columns of $G$ given by $\frac{g_i}{L_n}$. Additionally, we will assign to $z_{n+1}$ the form $z_{n+1} = z' - \frac{\alpha}{L_n} g_n$ for some $z' \in \R^d$ and  $\alpha \in \R$.

We first claim that $U_0 \geq 0$. Since $z_1 = x_0 - \frac{1}{L_0} g_0$, and $\tau_0 = 1$, this can be seen as
\begin{align*}
    U_0 &= \tau_0\left(f_\star - f_0 + \frac{1}{2 L_0}\norm{g_0}^2\right) + \frac{L_0}{2}\norm{x_0-x_\star}^2-\frac{L_0}{2}\norm{z_1-x_\star}^2 \\
    & = f_\star - f_0 - \dotp{g_0}{x_\star - x_0} \\
    & = W_{\star,0} \geq 0
\end{align*}
where the inequality follows from the convexity of our function $f$.

Let $\idxSetSuccess = \{i \in [n-\idxMem, n-1] \mid \tau_i > 0\}$ and recall that our strategic memory update in \cref{Alg:BaseAlg} ensures that $\idxSetSuccess \neq \emptyset$ for any iteration $n$. We can therefore set $\idxLastSuccess = \max\{i \in \idxSetSuccess\}$ and $\idxArgmin \in \argmin_{i \in \idxSetSuccess} v_i$, and then $U'$ from \eqref{Eqn:Def_UPrime} is properly defined for some $z'$ and $\tau',\Delta' \geq 0$. The unknown variables are $f_\star$, $f_n$, $g_n$, and $x_\star$, so we can view \eqref{Eqn:UPrimeInduction} in terms of a polynomial of these variables. Accordingly, we rewrite \eqref{Eqn:Def_UPrime} and \eqref{Eqn:UPrimeInduction} as
\begin{align*}
    U' &= \left(\frac{L_n}{2}\norm{x_0}^2 - \frac{L_n}{2}\norm{z'}^2 - \tau' v_\idxArgmin + \Delta' + \delta_n \right) + \tau' f_\star + \langle L_n(z'-x_0), x_\star \rangle,  \\
    U' &= \left(-\ip{\Ucoef}{h} - \ip{\Wcoef}{w} + \ip{\Ucoef}{\Delta} + \epsilon \right) + \left(\ip{\Ucoef}{\tau} + \fullSum{\Wcoef} \right) f_\star \\
    & \qquad \qquad \qquad + \langle L_n Z \Ucoef - L_n G \Wcoef, x_\star \rangle.
\end{align*}
Setting these equal, we obtain the equations
\begin{align}
    \frac{L_n}{2}\norm{x_0}^2 - \frac{L_n}{2}\norm{z'}^2 - \tau' v_\idxArgmin + \Delta' + \delta_n & = -\ip{\Ucoef}{h}  - \ip{\Wcoef}{w} + \ip{\Ucoef}{\Delta} + \epsilon , \label{Eqn:Derivation_Eq1} \\ 
    \tau' & = \ip{\Ucoef}{\tau} + \fullSum{\Wcoef} , \label{Eqn:Derivation_Eq2} \\
    L_n(z' - x_0) &= L_n Z \Ucoef - L_n G \Wcoef. \label{Eqn:Derivation_Eq3}
\end{align}

Next, we claim 
\begin{equation}
    U_n = U' + \tau' Q_{m,n}(L_n) + (\tau_n - \tau') W_{\star,n} \tag{\ref{Eqn:UnInduction}}
\end{equation}
for some value $\tau_n$. We can rewrite $U_n$ in terms of $z'$ as
\begin{align*}
    U_n &=  \left(\frac{L_n}{2}\norm{x_0}^2 - \frac{L_n}{2} \norm{z'}^2 + \Delta_n \right) + \tau_n f_\star - \tau_n f_n + \langle \alpha z', g_n \rangle + \langle L_n(z' - x_0), x_\star \rangle \\
    & \qquad \qquad \qquad + (\tau_n \indicator{n<N} - \alpha^2 ) \frac{1}{2L_n}\|g_n\|^2 -\alpha \langle g_n, x_\star \rangle
\end{align*}
where $\indicator{n<N}$ is the indicator function for $n<N$.
Then expanding terms of \eqref{Eqn:UnInduction}, we have
\begin{align}
    \Delta_n &= \Delta' + \delta_n, \label{Eqn:Derivation_Eq4} \\
    \alpha z' &= \tau' \left(x_n - (x_\idxArgmin - \frac{1}{L_n} g_\idxArgmin)\right) + (\tau_n - \tau') x_n, \label{Eqn:Derivation_Eq5} \\
    \tau_n \indicator{n<N} - \alpha^2 & = -\tau', \label{Eqn:Derivation_Eq6} \\
    \alpha & = \tau_n - \tau'. \label{Eqn:Derivation_Eq7} 
\end{align}
The conditions \eqref{Eqn:Derivation_Eq6} and \eqref{Eqn:Derivation_Eq7} exactly recover the construction of $\tau_n$ in \cref{Alg:OBLUpdate}.
We can simplify \eqref{Eqn:Derivation_Eq3} and \eqref{Eqn:Derivation_Eq5} to obtain explicit expressions for $z'$ and $x_n$:
\begin{align}
    z' &= x_0 + Z \Ucoef - G \Wcoef, \\
    x_n & = \frac{1}{\tau_n}\left(\tau'(x_\idxArgmin - \frac{1}{L_n} g_\idxArgmin) + (\tau_n - \tau') z' \right).
\end{align}

Our goal is to obtain the best inductive hypothesis $U' \geq 0$. Thus, we want to maximize $\tau'$ while keeping $\Delta'$ bounded, and subject to our constraints \eqref{Eqn:Derivation_Eq1}-\eqref{Eqn:Derivation_Eq7} and the nonnegativity of our terms $\Ucoef$, $\Wcoef$, and $\epsilon$. Recall from \cref{SubSec:FeasibilityProof_Part1}, that the choice $\Ucoef = \frac{L_n}{L_\idxLastSuccess} e_\idxLastSuccess, \Wcoef = 0$ is feasible. Moreover, this aligns with the standard induction for OBL \eqref{Eqn:StandardOBLUpdate}, for which $\Delta' = \frac{L_n}{L_\idxLastSuccess} \Delta_\idxLastSuccess = \ip{\Delta}{\Ucoef}$. Therefore, it is valid to bound\footnote{While there are several options for bounding $\Delta'$, we choose this bound because it allows $\Delta_n$ to not be monotonically increasing (See \cref{Rem:DeltaN}).}
\begin{equation} \label{Eqn:Delta_nExpr}
    \Delta' \leq \ip{\Ucoef}{\Delta}.
\end{equation}
This yields the optimization problem:
\begin{align} \label{Eqn:OptProb1}
    \left\{\begin{array}{ll}
            \max_{\Ucoef, \Wcoef, \Delta', \epsilon} & \ip{\Ucoef}{\tau} + \fullSum{\Wcoef} \\
            \text{s.t.} & \frac{L_n}{2}\norm{x_0}^2 - \frac{L_n}{2} \norm{x_0 + Z \Ucoef - G \Wcoef}^2 = \sum_{i=n-k}^{n-1} \Ucoef_i (-h_i + v_\idxArgmin \tau_i) \\
            &  \qquad \qquad \qquad \qquad  + \sum_{i=n-k}^{n-1} \Wcoef_i (-w_i + v_\idxArgmin) + \ip{ \Ucoef}{\Delta} - \Delta' - \delta_n + \epsilon \\
        & 0 \leq \Delta' \leq \ip{\Ucoef}{\Delta} \\
        & \Ucoef,\, \Wcoef, \, \epsilon \geq 0 .
    \end{array}\right.
\end{align}

We can take several steps to simplify \eqref{Eqn:OptProb1}. We can remove $\epsilon$ from our first constraint and replace it with an inequality. Next, due to the role of $\Delta'$ in the first constraint and its absence from the objective, one can see it will be pushed to its maximum value. We therefore fix $\Delta' = \ip{\Ucoef}{\Delta}$. With these simplifications, and rearranging our first constraint, we obtain
\begin{align} \label{Eqn:OptProb2}
    \left\{\begin{array}{ll}
            \max_{\Ucoef, \Wcoef} & \ip{\Ucoef}{\tau} + \fullSum{\Wcoef}\\
            \text{s.t.} & \frac{L_n}{2} \norm{Z \Ucoef - G \Wcoef}^2 \leq  \sum_{i=n-k}^{n-1} \Ucoef_i (h_i - v_m \tau_i) + \sum_{i=n-k}^{n-1} \Wcoef_i (w_i - v_m) \\
            & \qquad \qquad \qquad \qquad \qquad - \langle L_n Z \Ucoef, x_0 \rangle + \langle L_n G \Wcoef, x_0 \rangle + \delta_n \\
        & \Ucoef,\, \Wcoef \geq 0.
    \end{array}\right.
\end{align}
Recalling our definition of $\epsFunc$ from \eqref{Eqn:epsFunc}, this is exactly our optimization problem \eqref{Eqn:SOCP}.
Finally, with $\tau'$ as the solution to \eqref{Eqn:OptProb2}, i.e. $\tau' = \ip{\Ucoef}{\tau} + \fullSum{\Wcoef}$, and $\tau_n$ set according to \cref{Alg:OBLUpdate}, we recover the exact form of \cref{Alg:BaseAlg}.

All that remains to be shown is that $U_n \geq 0$. By assumption, we have that $U_i \geq 0$ for all $i \in \idxSetSuccess$ and $Q_{\idxArgmin, n}(L_n) \geq 0$. Combining with the nonnegativity of our terms $W_{\star,i}$, equations \eqref{Eqn:UPrimeInduction} and \eqref{Eqn:UnInduction} yield the fact that $U_n \geq 0$. Rearranging $U_n \geq 0$ and $U_N \geq 0$ yield the final equations in our theorem.

\subsection{Proof of Lemma~\ref{Lem:Feasibility} (iii)} \label{SubSec:FeasibilityProof_Part2}

Suppose that \eqref{Eqn:SOCP} is unbounded. Then its feasible domain must be unbounded. By the convexity of our feasible region, the nonnegativity of $\Ucoef, \Wcoef$ and the feasibility of $(\Ucoef,\Wcoef) = (0,0)$ in \eqref{Eqn:SOCP}, this means there must exist nonzero $(\Ucoef, \Wcoef)$ such that $(c\Ucoef, c\Wcoef)$ is feasible for all $c \geq 0$. Since our quadratic constraint must hold as $c \to \infty$, we must have $Z \Ucoef - G \Wcoef = 0$. Then, with $\tau' = c(\ip{\Ucoef}{\tau} + \fullSum{\Wcoef})$ and $z' = x_0 + c(Z \Ucoef - G \Wcoef) = x_0$, we apply our OBL update from \cref{Alg:BaseAlg}.
Observe that $\tau' \to \infty$ as $c$ increases since $\ip{\Ucoef}{\tau} + \fullSum{\Wcoef} > 0$. Moreover, we see that $\frac{\tau'}{\tau_n} \to 1$ as $\tau' \to \infty$. Thus, as $c \to \infty$, we get $x_n \to x_\idxArgmin - \frac{1}{L_n} g_\idxArgmin$.

Applying our guarantee from \eqref{Eqn:ConvRate}, we have
\begin{equation*}
    f(x_n) - f_\star \leq \frac{1}{\tau_n}\left( \frac{L_n}{2}\|x_0 - x_\star\|^2 + \Delta_n \right) .
\end{equation*}
Taking the limit of both sides using our above arguments, we obtain $f(x_\idxArgmin - \frac{1}{L_n} g_\idxArgmin) \leq f_\star$ and therefore  $x_\idxArgmin - \frac{1}{L_n} g_\idxArgmin \in \argmin f$.

\subsection{Proof of Theorem~\ref{Thm:DynamicGuarantees}} \label{SubSec:DynamicProof}

We will assume that at each iteration, \eqref{Eqn:SOCP} is bounded; otherwise, from \cref{Lem:Feasibility} we would have found a minimizer of $f$ and the algorithm would terminate. We show by induction that $\tau_{i,i} \geq \tau_{n,i}$ for all $i \in [n,N]$. For our base case, we have by definition $\tau_n = \tau_{i,i} = \tau_{n,i}$ at $i = n$.

Note that in the setting of $\mathcal{O}^\mathcal{H}_L$, at any iteration $n$ we have $\idxLastSuccess = n-1$.
Then from \cref{Lem:Feasibility}, we know that at iteration $i > n$, \cref{Alg:BaseAlg} will select $\tau'$ such that $\tau' \geq \tau_{i-1,i-1}$. Applying our update from \cref{Alg:BaseAlg}, we have
for $n < i < N$,
\begin{equation*}
    \tau_{i,i} =\tau' + \frac{1 + \sqrt{1+8\tau'}}{2}  \geq \tau_{i-1,i-1} + \frac{1 + \sqrt{1+8\tau_{i-1,i-1}}}{2} 
     = \tau_{i-1,i}
\end{equation*}
and similar for the case $i=N$.
Then by induction, using our standard guarantee from $U_N$, we have
\begin{equation*}
    \frac{f_N - f_\star}{\frac{L}{2}\|x_0-x_\star\|^2} \leq \frac{1}{\tau_{N,N}} \leq \frac{1}{\tau_{N-1,N}} \leq \dots \leq \frac{1}{\tau_{1,N}} \leq \frac{1}{\tau_{0,N}}.
\end{equation*}
The final equality of our theorem $\frac{1}{\tau_{0,N}} = \frac{2}{N(N+1)+\sqrt{2N(N+1)}}$ comes directly from \cite[Theorem 4]{OBL}.

\subsection{Proof of Theorem~\ref{Thm:ConvergenceAssumingSC}} \label{SubSec:ConvAssumingSCProof}

Since $\epoch{x}_{n+1}$ is a serious step, we know that $\epoch{L}_{n+1} = \epoch{L}_n$ and we assume $\epoch{\Delta}_{n+1} \leq \epoch{\Delta}_n$. Moreover, from \cref{Lem:Feasibility}, we know $\epoch{\tau}_{n+1} \geq \epoch{\tau}_n$.

We therefore have
\begin{align*}
    \epoch{\tau}_{n+1} \geq \epoch{\tau}_n & \geq \frac{2 \epoch{L}_n}{\mu} + \frac{2 \epoch{\Delta}_n}{f(\epoch{x}_0) - f(\epoch{x}_n)} \\
    & \geq \frac{2 \epoch{L}_{n+1}}{\mu} + \frac{2 \epoch{\Delta}_{n+1}}{f(\epoch{x}_0) - f(x_\star)} \\
    & = \frac{2\epoch{L}_{n+1} (f(\epoch{x}_0) - f(x_\star)) + 2 \mu \epoch{\Delta}_{n+1}}{\mu(f(\epoch{x}_0) - f(x_\star))}.
\end{align*}
Then by our convergence rate from \eqref{Eqn:ConvRateN}, having applied the ``final step'' update to obtain $\epoch{x}_{n+1}$, we can write
\begin{align*}
    f(\epoch{x}_{n+1}) - f(x_\star) & \leq \frac{1}{\epoch{\tau}_{n+1}} \left(\frac{\epoch{L}_{n+1}}{2}\|\epoch{x}_0 - x_\star\|^2 + \epoch{\Delta}_{n+1} \right)\\
    &\leq \frac{1}{\epoch{\tau}_{n+1}} \left(\frac{\epoch{L}_{n+1}}{\mu}(f(\epoch{x}_0) - f(x_\star)) + \epoch{\Delta}_{n+1}\right) \\
    & \leq \left(\frac{\epoch{L}_{n+1}(f(\epoch{x}_0) - f(x_\star)) + \mu \epoch{\Delta}_{n+1}}{\mu}\right) \\
    & \qquad \qquad \cdot \left(\frac{\mu(f(\epoch{x}_0) - f(x_\star))}{2\epoch{L}_{n+1}(f(\epoch{x}_0) - f(x_\star)) + 2 \mu \epoch{\Delta}_{n+1}} \right) \\
    & = \frac{f(\epoch{x}_0) - f(x_\star)}{2}
\end{align*}
where the second inequality comes from the strong convexity inequality \eqref{Eqn:SCIneq} applied to $x = x_\star$ and $y = \epoch{x}_0$.

Recall that there can be at most $\log_2(\frac{\epoch{L}_{n}}{\epoch{L}_0})$ null steps in epoch $\idxEpoch$ by iteration $n$. Accounting for null steps in the result of \cref{Thm:DynamicGuarantees}, we have that $\epoch{\tau}_{n} \geq \frac{1}{2}(n - \log_2(\frac{\epoch{L}_{n}}{\epoch{L}_0}) )^2$. Rearranging, we get that if our restart condition is not satisfied, then
\begin{equation*}
    n \leq \sqrt{2\epoch{\tau}_{n}} + \log_2\left(\frac{\epoch{L}_{n}}{\epoch{L}_0}\right) < \sqrt{\frac{4 \epoch{L}_n}{\mu} + \frac{4 \epoch{\Delta}_n}{f(\epoch{x}_0) - f(x_\star)} } + \log_2\left(\frac{\epoch{L}_n}{\epoch{L}_0}\right).
\end{equation*}

    \paragraph{Acknowledgments.} This work was supported in part by the Air Force Office of Scientific Research under award number FA9550-23-1-0531. Benjamin Grimmer was additionally supported as a fellow of the Alfred P. Sloan Foundation.

    {\small
    \bibliographystyle{unsrt}
    \bibliography{references}
    }

    \appendix
    \section{Appendix} \label{Sec:Appendix}

\subsection{Preconditioning Algorithms} \label{SubSec:PreconditioningAlgs}

We briefly present standard algorithms for efficient computation of matrix-vector products with the inverse Hessian approximation $B$ and the Hessian approximation $B^{-1}$. \cref{Alg:InvHessianMult} follows the standard two-loop recursion method \cite{LBFGS_Nocedal}. \cref{Alg:HessianMult} uses the compact representation \cite{ByrdNocedal_1994} of the Hessian approximation $B^{-1}$,
\begin{equation*}
    B^{-1} = \theta I - \begin{bmatrix} \theta S & Y \end{bmatrix} \begin{bmatrix} \theta S^T S & T \\ T^T & -D \end{bmatrix}^{-1} \begin{bmatrix} \theta S^T \\ Y^T \end{bmatrix}
\end{equation*}
to compute $B^{-1} v$ without storing any full-dimension matrices (see \cref{Alg:HessianMult} for variable definitions).

\begin{minipage}{0.49\textwidth}
\AlgSpacingA
\begin{algorithm}[H]
\caption{Efficient calculation of $Bv$ \cite{LBFGS_Nocedal}} \label{Alg:InvHessianMult} 
        \SetKwInOut{Input}{Input}
        \SetKwInOut{Output}{Output}
        \Input{ $\mathcal{M} := \{(s_i, y_i)\}_{i=1}^{t}$, $v \in \R^d$}
        $q = v$ \\
        \For{$i=t, t-1, \dots, 1$}
        {$\eta_i = \frac{1}{ y_i^T s_i}$ \\
        $\alpha_i = \eta_i (s_i^T q)$  \\
        $q = q - \alpha_i y_i$
        }
        $r = \frac{s_t^T  y_t }{y_t^T y_t} q$ \\
        \For{$i=1, \dots, t$}
        {$\beta_i = \eta_i (y_i^T r) $ \\
        $r = r + (\alpha_i - \beta_i) s_i$
        }
        \Output{ $r$}
\end{algorithm}
\end{minipage}
\hfill
\begin{minipage}{0.49\textwidth}
\AlgSpacingB
\begin{algorithm}[H]
\caption{Efficient calculation of $B^{-1} v$ \cite{ByrdNocedal_1994}}\label{Alg:HessianMult} 
    \SetKwInOut{Input}{Input}
    \SetKwInOut{Output}{Output}
    \Input{ $\mathcal{M} := \{(s_i, y_i)\}_{i=1}^{t}$, $v \in \R^d$}
    $\theta = \frac{y_t^T y_t}{s_t^T y_t}$ \\
    $D = \text{diag}(s_1^T y_1, \dots, s_t^T y_t)$ \\
    $S = \begin{bmatrix} s_1 & \cdots & s_t\end{bmatrix} \quad Y= \begin{bmatrix} y_1 & \cdots & y_t\end{bmatrix}$ \\
    $T_{i,j} = \begin{cases} s_i^T y_j \quad & \text{if } i>j \\ 0 & \text{otherwise} \end{cases}$ \\
    $M = \begin{bmatrix} \theta S^T S & T \\ T^T & -D \end{bmatrix}$ \\
    $q = M^{-1} \begin{bmatrix} \theta S^T v \\ Y^T v \end{bmatrix}$ \\
    $r = \theta v - \begin{bmatrix} \theta S & Y \end{bmatrix} q$ \\
    \Output{ $r$}
\end{algorithm}
\end{minipage}

\subsection{Proof of Theorem~\ref{Thm:SubgamePerfect}} \label{SubSec:SubgamePerfectProof}

From \cref{Thm:DynamicGuarantees}, we have the upper bound
\begin{equation}
    \min_{A\in \mathcal{A}^\mathcal{H}} \max_{O \in\mathcal{O}_L^\mathcal{H}} \frac{f(x_N)-f(x_\star)}{\frac{L}{2}\|x_0-x_\star\|^2} \leq \frac{1}{\tau_{n,N}}.
\end{equation}
Thus, to demonstrate the minimax optimality, it suffices to provide a matching lower bound.
To do so, we will construct a particular oracle that is hard for all $A\in \mathcal{A}^\mathcal{H}$.

First, due to our oracle class $\mathcal{O}_L^\mathcal{H}$, we know $L = L_0 = \dots, L_n$. In this setting, we immediately have $\Delta' = 0$ and $\delta_n = 0$.
Our subproblem therefore reduces to the following:
\begin{equation} \label{Eqn:SOCP_Simplified}
    \tau' = \left\{\begin{array}{ll}
            \max_{\Ucoef, \Wcoef} & \ip{\Ucoef}{\tau} + \fullSum{\Wcoef} \\
            \text{s.t.} & \ip{\Ucoef}{a} + \ip{\Wcoef}{b} - \frac{L}{2} \norm{Z \Ucoef - G \Wcoef}^2 \geq 0 \\
            & \Ucoef, \, \, \Wcoef \geq 0. \\
            \end{array} \right.
\end{equation}

Next, we introduce some notation. Let $\phi_n$ be the optimal value of \eqref{Eqn:SOCP} at iteration $n$, (i.e., $\phi_n = \ip{\Ucoef}{\tau} + \fullSum{\Wcoef}$) and define $\tau_{n,i}$ as in \cref{Thm:DynamicGuarantees}. Further define
\begin{align*}
    \phi_i &= \tau_{n,i-1} \quad \forall i \in [n+1,N] \\
        \psi_i &= \begin{cases} \frac{1 + \sqrt{1+8\phi_i}}{2} & \text{if } n \leq i \leq N-1 \\ \sqrt{\phi_i} & \text{if } i=N
    \end{cases} \\
    \tau_{n,i} &= \phi_i + \psi_i.
\end{align*}

We now consider the dual program to \eqref{Eqn:SOCP_Simplified} and obtain the following lemma. Recall from \cref{Lem:Feasibility} that a bounded solution to \eqref{Eqn:SOCP} implies that strong duality holds. The derivation of the dual problem below is effectively identical to that in~\cite[Lemma 3]{SPGM}.

\begin{lemma}
    The dual program to \eqref{Eqn:SOCP_Simplified} is
    \begin{equation}\label{Eqn:DualSOCP}
        \left\{\begin{array}{ll}
        \inf_{\xi,y} & \frac{2\|y\|^2}{\xi L} + \frac{\xi}{2}\|x_0\|^2 + 2 \langle x_0, y \rangle \\
        \text{\emph{s.t.}} & \xi(-h + v_m \tau) - 2 Z^T y \geq \tau \\
        & \xi(-w + v_m \ones) + 2G^T y \geq 1 \\
        & \xi > 0
        \end{array} \right.
    \end{equation}
    Strong duality holds between \eqref{Eqn:SOCP_Simplified} and \eqref{Eqn:DualSOCP}. If $(\Ucoef, \Wcoef)$ and $(\xi, y)$ are optimal solutions to \eqref{Eqn:SOCP} and \eqref{Eqn:DualSOCP} respectively, then
    \begin{equation*}
        \frac{-2y}{\xi L} = x_0 + Z \Ucoef - G \Wcoef = z'.
    \end{equation*}
\end{lemma}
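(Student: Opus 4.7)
The plan is to derive the dual via Lagrangian duality, following the approach noted in \cite[Lemma~3]{SPGM}. First, to expose the natural quadratic structure, I would rewrite the constraint of \eqref{Eqn:SOCP_Simplified} in terms of $z' := x_0 + Z\Ucoef - G\Wcoef$. Substituting the specializations of $a_i$ and $b_i$ (with $L_i = L$), and using $\|z'\|^2 = \|x_0\|^2 + 2\dotp{x_0}{Z\Ucoef - G\Wcoef} + \|Z\Ucoef - G\Wcoef\|^2$, the quadratic inequality in \eqref{Eqn:SOCP_Simplified} rearranges to
\begin{equation*}
\tfrac{L}{2}\|z'\|^2 - \tfrac{L}{2}\|x_0\|^2 + \dotp{\Ucoef}{-h + v_m\tau} + \dotp{\Wcoef}{-w + v_m\ones} \leq 0.
\end{equation*}

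Next, I would introduce a scalar multiplier $\xi > 0$ for this constraint (the strict positivity will fall out of the derivation) and form the Lagrangian; nonnegativity of $\Ucoef, \Wcoef$ is absorbed into the outer supremum. The only nonlinear term in the Lagrangian is the penalty $-\tfrac{\xi L}{2}\|z'\|^2$, which I would linearize via the Fenchel identity
\begin{equation*}
-\tfrac{\xi L}{2}\|z'\|^2 = \min_{y \in \R^d}\left[-\dotp{y}{z'} + \tfrac{\|y\|^2}{2\xi L}\right],
\end{equation*}
thereby introducing the new dual variable $y$. Expanding $\dotp{y}{z'}$ in terms of $\Ucoef$ and $\Wcoef$ via $Z^T y$ and $G^T y$, and then interchanging the inner min over $y$ with the outer max over $\Ucoef, \Wcoef \geq 0$ (a concave--convex minimax swap, legitimized by the Slater condition below), the inner supremum becomes finite precisely when the $\Ucoef$- and $\Wcoef$-coefficients are nonpositive. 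These feasibility requirements become the two inequality constraints in the stated dual. A final affine rescaling $y \mapsto -2y$ realigns signs and absorbs the factor of two to reproduce the exact form of \eqref{Eqn:DualSOCP}.

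For strong duality, Lemma~\ref{Lem:Feasibility}(ii) already establishes strict primal feasibility whenever \eqref{Eqn:SOCP} is bounded, which serves as Slater's condition for this convex program (linear objective, a single convex quadratic inequality, nonnegativity constraints). Standard conic-programming duality then yields zero duality gap and dual attainment. The primal--dual identity $-2y/(\xi L) = z'$ is a direct byproduct of the Fenchel step: the inner minimizer in the original variables is $y = \xi L z'$, and after the $y \mapsto -2y$ rescaling this reads $z' = -2y/(\xi L)$; combined with $z' = x_0 + Z\Ucoef - G\Wcoef$, this gives the stated equality. The main obstacle is purely bookkeeping---tracking signs and factors of two through the change of variables so that the dual appears in the exact stated form, and carefully noting that $\xi = 0$ is excluded because the Fenchel identity degenerates there. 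Conceptually, the entire proof reduces to the Fenchel dual of one quadratic, one minimax interchange, and one invocation of Slater's condition.
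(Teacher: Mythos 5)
Your proposal is correct and follows essentially the same route the paper takes, namely the Lagrangian/Fenchel-conjugate derivation that the paper delegates to \cite[Lemma 3]{SPGM}, with strong duality supplied by the strict feasibility from Lemma~\ref{Lem:Feasibility}(ii) and the identity $z'=-2y/(\xi L)$ falling out of the inner minimizer $y=\xi L z'$ exactly as you describe. One small but worthwhile observation: your derivation produces the term $\frac{\xi L}{2}\|x_0\|^2$ in the dual objective, which matches the paper's subsequent computation of $\phi_n=\frac{\xi L \sigma}{2}$, so the $\frac{\xi}{2}\|x_0\|^2$ appearing in the lemma statement \eqref{Eqn:DualSOCP} is evidently a typo and your version is the consistent one.
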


We continue following the approach of \cite{SPGM} and define $f_\star = v_m - \frac{1}{\xi}$. Then our feasibility constraints for $\xi$ can be rewritten for all $i \in [n-k,n-1]$ as
\begin{align}
    \tau_i\left(f_\star - f_i + \frac{1}{2L}\|g_i\|^2 \right) &+ \frac{L}{2}\|x_0\|^2 - \frac{L}{2}\|z_{i+1}\|^2 + L\langle z_{i+1} - x_0, z' \rangle \geq 0 \label{Eqn:DualFeas1} \\
    &f_\star - f_i - \langle g_i, z' - x_i \rangle \geq 0 . \label{Eqn:DualFeas2}
\end{align}
Letting $\sigma = \|z' - x_0\|^2$ and applying strong duality we have
\begin{align*}
    \phi_n &= \frac{2\|y\|^2}{\xi L} + \frac{\xi L}{2}\|x_0\|^2 + 2 \langle x_0, y \rangle \\
    & = \frac{\xi L}{2} \left\|\frac{2y}{\xi L} + x_0 \right\|^2 \\
    & = \frac{\xi L \sigma}{2}
\end{align*}
from which we obtain $f_\star = v_m - \frac{L \sigma}{2\phi_n}$.

Let $e_0, e_1, \dots$ be an orthonormal basis. We assume without loss of generality that $g_i \in \text{span}(e_0, \dots, e_i)$ for all $i < n$. We now define the responses of our oracle $x_i \mapsto (f_i, g_i)$ for $i = n, n+1, \dots, N$ by
\begin{align}
    f_i &= f_\star + L \sigma \psi_i \beta_i = v_m + \frac{L \sigma}{2} (2 \psi_i \beta_i - \frac{1}{\phi_n}), \\
    g_i &= L\sqrt{\sigma \beta_i} e_i 
\end{align}
where $\beta_i$ is defined inductively as $\beta_i = \frac{1}{\phi_i(2 \psi_i + 1)} \left( 1 + \sum_{j=n}^{i-1} \psi_j^2 \beta_j\right)$ with $\beta_n = \frac{1}{\phi_n (2\psi_n + 1)}$. Note that this oracle selection is independent of the value of $x_i$; this is permissible since our oracle is not restricted to return values corresponding to an actual function\footnote{Indeed, this construction even allows the oracle to be ``inconsistent'' over time: If an algorithm selects $x_1 = x_0$, the oracle can return $(f_1,g_1)$ with $f_1 \neq f_0$ and $g_1 \neq g_0$, as long as $Q_{0,1}(L) \geq 0$ still holds.}. It is only required to satisfy $\{Q_{i,j}\}_{i < j < n} \cup \{W_{\star,i}\}_{i=0}^{n-1}$. Lastly, we set $x_\star = z_{N+1}$.

We claim that this oracle does indeed satisfy our inequality set $\{Q_{i,j}\}_{i < j < n} \cup \{W_{\star,i}\}_{i=0}^{n-1}$. We will verify each inequality directly, but to do so, we first need a few useful lemmas.

\begin{lemma} \label{Lem:BetaInequalities}
    Let $j > i \geq n$. Then the following hold
    \begin{align}
        \beta_i &\leq \frac{1}{\phi_n (2\psi_i + 1)} \label{Eqn:BetaLemma1} \\
        (2\psi_j + 1) \beta_j & \leq (2 \psi_i - 1) \beta_i. \label{Eqn:BetaLemma2}
    \end{align}
\end{lemma}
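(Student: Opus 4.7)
Both inequalities concern the recursively defined scalar sequence
\[
\beta_n = \frac{1}{\phi_n(2\psi_n+1)}, \qquad \beta_i = \frac{1}{\phi_i(2\psi_i+1)}\Big(1 + \sum_{j=n}^{i-1}\psi_j^2 \beta_j\Big),
\]
together with the relations $\phi_{i+1}=\phi_i+\psi_i$ and the quadratic identity $\psi_i^2 = \psi_i + 2\phi_i$ valid for $n\le i\le N-1$ (since $\psi_i = (1+\sqrt{1+8\phi_i})/2$ is the positive root of $x^2-x-2\phi_i$). An easy induction shows $\beta_i>0$ throughout. I will handle the two claims separately; neither proof will invoke the quadratic identity at $i=N$, so the different definition $\psi_N=\sqrt{\phi_N}$ is harmless.

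\textbf{Inequality \eqref{Eqn:BetaLemma1}.} I proceed by induction on $i$. The base case $i=n$ holds with equality by definition of $\beta_n$. Assume $\beta_j \le 1/(\phi_n(2\psi_j+1))$ for all $n\le j<i$. Multiplying the recursion through by $\phi_i(2\psi_i+1)$ and applying the induction hypothesis to each summand,
\[
\beta_i\,\phi_i(2\psi_i+1) = 1 + \sum_{j=n}^{i-1}\psi_j^2\beta_j \le 1 + \frac{1}{\phi_n}\sum_{j=n}^{i-1}\frac{\psi_j^2}{2\psi_j+1}.
\]
The trivial bound $\psi_j^2 \le \psi_j(2\psi_j+1)$ gives $\psi_j^2/(2\psi_j+1) \le \psi_j$, and then the telescoping identity $\sum_{j=n}^{i-1}\psi_j = \phi_i-\phi_n$ collapses the right-hand side to $1 + (\phi_i-\phi_n)/\phi_n = \phi_i/\phi_n$. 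Dividing through by $\phi_i(2\psi_i+1)$ gives \eqref{Eqn:BetaLemma1}.

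\textbf{Inequality \eqref{Eqn:BetaLemma2}.} The engine here is the clean identity
\[
(2\psi_{i+1}+1)\beta_{i+1} = (2\psi_i-1)\beta_i \qquad (n\le i\le N-1),
\]
which I would derive as follows. The recursions for $\beta_{i+1}$ and $\beta_i$ differ by one term, yielding
\[
\beta_{i+1}\phi_{i+1}(2\psi_{i+1}+1) = \beta_i\phi_i(2\psi_i+1) + \psi_i^2\beta_i = \beta_i\bigl[(2\psi_i+1)\phi_i+\psi_i^2\bigr].
\]
Using $\psi_i^2=\psi_i+2\phi_i$ and $\phi_{i+1}=\phi_i+\psi_i$, a direct expansion shows $(2\psi_i+1)\phi_i+\psi_i^2 = (2\psi_i-1)\phi_{i+1}$, after which the common factor $\phi_{i+1}$ cancels. (The identity even extends to $i=N-1$, because only $\psi_{N-1}^2$ appears, not $\psi_N^2$.) Now set $a_k := (2\psi_k+1)\beta_k$; the identity rewrites as $a_{k+1} = a_k - 2\beta_k \le a_k$, so $(a_k)_{k\ge n}$ is non-increasing. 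For any $j>i\ge n$ we therefore get
\[
(2\psi_j+1)\beta_j = a_j \le a_{i+1} = (2\psi_i-1)\beta_i,
\]
which is exactly \eqref{Eqn:BetaLemma2}.

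\textbf{Main obstacle.} The only nontrivial step is spotting the algebraic collapse $(2\psi_i+1)\phi_i+\psi_i^2 = (2\psi_i-1)\phi_{i+1}$, which turns a recursive inequality into exact monotonicity; once noticed, both proofs reduce to routine bookkeeping. It is also worth checking that the edge case at $i=N-1$ does not require applying the quadratic identity to $\psi_N$ (it does not), so the specialized definition $\psi_N=\sqrt{\phi_N}$ never causes trouble.
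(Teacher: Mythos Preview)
Your proof is correct. For \eqref{Eqn:BetaLemma2} you proceed essentially as the paper does: both arguments derive the one-step identity $(2\psi_{i+1}+1)\beta_{i+1}=(2\psi_i-1)\beta_i$ from the recursion together with $\psi_i^2=\psi_i+2\phi_i$ and $\phi_{i+1}=\phi_i+\psi_i$, and then read off the desired monotonicity; your reformulation via $a_k=(2\psi_k+1)\beta_k$ and $a_{k+1}=a_k-2\beta_k$ is just a convenient repackaging of the paper's induction.

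For \eqref{Eqn:BetaLemma1} you take a genuinely different route. The paper first establishes the one-step identity and then inducts, bounding $\beta_i=\frac{(2\psi_{i-1}-1)\beta_{i-1}}{2\psi_i+1}$ via the hypothesis on $\beta_{i-1}$ and the trivial inequality $\frac{2\psi_{i-1}-1}{2\psi_{i-1}+1}\le 1$. You instead bypass the identity entirely: strong induction plus the crude bound $\psi_j^2/(2\psi_j+1)\le\psi_j$ feeds into the telescoping $\sum_{j=n}^{i-1}\psi_j=\phi_i-\phi_n$, collapsing the sum to $\phi_i/\phi_n$ in one stroke. Your argument is a bit more elementary---it never needs the quadratic relation $\psi_i^2=\psi_i+2\phi_i$ for part \eqref{Eqn:BetaLemma1}, only the additive relation $\phi_{j+1}=\phi_j+\psi_j$---whereas the paper's route has the advantage of reusing the same identity for both parts. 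Your careful remark that neither argument invokes $\psi_N^2$ is well placed and matches the paper's handling of the final-step case.
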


\begin{proof}
    Note from our definition of $\psi_i$ that $\psi_i^2 = 2 \tau_{n,i} - \psi_i = \phi_i + \tau_{n,i}$ for $n \leq i < N$ and $\psi_N^2 = \tau_{n,N} - \psi_N = \phi_N$. From this we have
    \begin{equation*}
        \tau_{n,i}(2\psi_i - 1) = \begin{cases}
            \phi_i(2\psi_i+1)+\psi_i^2 \quad & \text{if } i \in [n, N-1] \\
            \phi_N(2\psi_N+1)+\psi_N^2 - \tau_{n,N} \quad & \text{if } i = N.
        \end{cases}
    \end{equation*}
Using this relation, we can write
\begin{align*}
    \phi_{i+1}(2 \psi_{i+1} + 1) \beta_{i+1} &= 1 + \sum_{\ell = n}^i \psi_\ell^2 \beta_\ell \\
    & = 1 + \sum_{\ell = n}^{i-1} \psi_\ell^2 \beta_\ell + \psi_i^2 \beta_i \\
    & = \phi_{i}(2 \psi_{i} + 1) \beta_{i} + \psi_i^2 \beta_i \\
    & = \tau_{n,i}(2 \psi_i - 1) \beta_i.
\end{align*}
Recognizing that $\phi_{i+1} = \tau_{n,i}$, we rearrange to obtain the identity 
\begin{equation} \label{Eqn:BetaIdentity}
    (2 \psi_{i+1} + 1) \beta_{i+1} = (2 \psi_i -1) \beta_i.
\end{equation}

We now prove \eqref{Eqn:BetaLemma1} by induction. By definition, $\beta_n = \frac{1}{\phi_n(2 \psi_n+1)}$, so our base case holds. Then for $i>n$, using \eqref{Eqn:BetaIdentity} we can write
\begin{align*}
    \beta_i = \frac{(2 \psi_{i-1} -1)\beta_{i-1}}{2 \psi_i + 1} \leq \frac{2 \psi_{i-1} - 1}{2\psi_i + 1} \frac{1}{\phi_n (2 \psi_{i-1} + 1)} \leq \frac{1}{\phi_n (2 \psi_{i-1} + 1)}
\end{align*}
where we use our induction hypothesis and then the fact that $\psi_i$ is monotonically increasing.

For the second inequality \eqref{Eqn:BetaLemma2}, applying our identity \eqref{Eqn:BetaIdentity} yields
\begin{equation*}
    (2 \psi_j + 1)\beta_j = (2 \psi_{j-1} - 1) \beta_{j-1} \leq (2 \psi_{j-1} + 1) \beta_{j-1}
\end{equation*}
and the result follows from induction.
\end{proof}

Now we begin verifying our inequalities. Since we are restricted to gradient span algorithms, we know that $x_i \in \text{span}\{g_0, \dots, g_{i-1}\}$ for all $i$. We will frequently use the fact from our orthonormal construction of $g_i$ that for any $j \geq n$, if $i<j$ we have $\langle g_j, g_i \rangle = 0$ and $\langle g_j, x_i - x_j \rangle = 0$. We begin with our inequalities $Q_{i,j}(L)$ for $i<j$.

\begin{itemize}
    \item Let $i < j < n$. Then by the definition of our oracle family $\mathcal{O}_L^\mathcal{H}$, we know $Q_{i,j}(L) \geq 0$.
    \item Let $i < n \leq j$. Then we have
\begin{align*}
    Q_{i,j}(L) &= f_i - f_j - \langle g_j, x_i - x_j \rangle - \frac{1}{2L}\|g_i - g_j\|^2 \\
    &= f_i - \frac{1}{2L}\|g_i\|^2 - f_j - \frac{1}{2L}\|g_j\|^2 \\
    & \geq v_m - f_j - \frac{1}{2L} \|g_j\|^2 \\
    & = \frac{L \sigma}{2} \left( \frac{1}{\phi_n} - (2 \psi_j + 1) \beta_j \right) \\
    & \geq 0
\end{align*}
where the first inequality follows from the definition of $m$ and the second inequality follows from \cref{Lem:BetaInequalities}. Note that in the special case $i=m$ and $j=n$, we have equality: $Q_{m,n}(L) = 0$.

\item Let $n \leq i < j$. Then we have
\begin{align*}
    Q_{i,j}(L) &= f_i - f_j - \langle g_j, x_i - x_j \rangle - \frac{1}{2L}\|g_i - g_j\|^2 \\
    &= f_i - \frac{1}{2L}\|g_i\|^2 - f_j - \frac{1}{2L}\|g_j\|^2 \\
    & = \frac{L \sigma}{2} \left( (2 \psi_i - 1) \beta_i - (2 \psi_j + 1) \beta_j  \right) \\
    & \geq 0
\end{align*}
where the inequality follows from \cref{Lem:BetaInequalities}. Note that in the special case $i=j-1$, from \eqref{Eqn:BetaIdentity} we have equality: $Q_{j-1,j}(L) = 0$.
\end{itemize}

Now consider the inequalities $W_{\star,i}$. 
\begin{itemize}
    \item Let $i<n$. Then
\begin{align*}
    W_{\star,i} &= f_\star - f_i - \langle g_i, x_\star - x_i \rangle \\
    & = f_\star - f_i - \langle g_i, z' - x_i \rangle \\
    & \geq 0.
\end{align*}
In the second line, we use the fact that $x_\star = z_{N+1} = z' - \sum_{\ell = n}^N \frac{\psi_\ell}{L} g_\ell$, along with the orthogonality of our $g_\ell$ vectors, and the inequality follows from our dual feasibility expression \eqref{Eqn:DualFeas2}.

\item Let $i \geq n$. Then we have
\begin{align*}
    W_{\star,i} &= f_\star - f_i - \langle g_i, x_\star - x_i \rangle \\
    & = f_\star - f_i - \langle g_i, z_{N+1} - x_i \rangle \\
    & = f_\star - f_i + \frac{\psi_i}{L}\|g_i\|^2 \\
    & = -L \sigma \psi_i \beta_i + \frac{\psi_i}{L} L^2 \sigma \beta_i \\
    &= 0.
\end{align*}
\end{itemize}

Next, we verify that our inductive hypotheses $U_i \geq 0$ hold at each step in our algorithm history. For $i<n$, we have
\begin{align*}
    U_i & = \tau_i\left(f_\star - f_i + \frac{1}{2L}\|g_i\|^2 \right) + \frac{L}{2}\|x_0 - x_\star\|^2 - \frac{L}{2}\|z_{i+1} - x_\star\|^2 \\
    & = \tau_i\left(f_\star - f_i + \frac{1}{2L}\|g_i\|^2 \right) + \frac{L}{2}\|x_0\|^2 - \frac{L}{2}\|z_{i+1}\|^2 + L \langle z_{i+1} - x_0 , x_\star \rangle \\
    & = \tau_i\left(f_\star - f_i + \frac{1}{2L}\|g_i\|^2 \right) + \frac{L}{2}\|x_0\|^2 - \frac{L}{2}\|z_{i+1}\|^2 + L \langle z_{i+1} - x_0 , z' \rangle \\
    & \geq 0.
\end{align*}
The third equality again follows from $x_\star = z_{N+1} = z' - \sum_{\ell = n}^{N} \frac{\psi_\ell}{L} g_\ell$ and the orthogonality of our vectors $g_\ell$, and the inequality follows from our dual feasibility expression \eqref{Eqn:DualFeas1}.

Thus we have shown that $O \in \mathcal{O}_L^\mathcal{H}$. All that remains to be shown is our lower bound. We claim that it is sufficient to show that $U_N = 0$. By the definition of $U_N$ we can then rearrange to obtain
\begin{align*}
    f_N - f_\star & = \frac{1}{\tau_{n,N}} \left(\frac{L}{2}\|x_0 - x_\star\|^2 - \frac{L}{2}\|z_{N+1} - x_\star\|^2 \right) \\
    & = \frac{1}{\tau_{n,N}} \left(\frac{L}{2}\|x_0 - x_\star\|^2 - \frac{L}{2}\|x_\star - x_\star\|^2 \right) \\
    & = \frac{L\|x_0 - x_\star\|^2}{2 \tau_{n,N}}.
\end{align*}
Therefore, the lower bound 
\begin{equation}
    \min_{A\in \mathcal{A}^\mathcal{H}} \max_{O \in\mathcal{O}_L^\mathcal{H}} \frac{f(x_N)-f(x_\star)}{\frac{L}{2}\|x_0-x_\star\|^2} \geq \frac{1}{\tau_{n,N}}
\end{equation}
holds.

To show that $U_N = 0$, we first look at $U'$:
\begin{align*}
    U' &= \phi_n\left(f_\star - f_m + \frac{1}{2L}\|g_m\|^2\right) + \frac{L}{2}\|x_0-x_\star\|^2 - \frac{L}{2}\|z' - x_\star\|^2 \\
    & = -\frac{L}{2}\|z'-x_0\|^2 + \frac{L}{2}\|x_0-x_\star\|^2 - \frac{L}{2}\|z' - x_\star\|^2 \\
    &= L \langle x_0-z', z' - x_\star \rangle \\
    &= L \langle x_0-z', \sum_{\ell = n}^N \frac{\psi_\ell}{L} g_\ell \rangle \\
    & = 0
\end{align*}
where the final equality comes from the fact that $x_0 - z' \in \text{span}(g_0, \dots, g_{n-1})$ and the orthogonality of $g_\ell$.
Finally, as noted above, we have $Q_{m,n}(L) = 0$, and $Q_{i-1,i}(L) = W_{\star,i} = 0$ for all $i \geq n$. Then following the standard OBL induction, we have
\begin{align*}
    U_N &= U_n + \sum_{\ell=n+1}^N \left(\phi_\ell Q_{\ell-1,\ell}(L) + \psi_\ell W_{\star,\ell} \right) \\
     & = U' + \phi_n Q_{m,n}(L) + \psi_n W_{\star,n} + \sum_{\ell=n+1}^N \left(\phi_\ell Q_{\ell-1,\ell}(L) + \psi_\ell W_{\star,\ell} \right) \\
     &= 0
\end{align*}
and our proof is complete.

\subsection{Additional Numerical Results} \label{SubSec:AdditionalNumerics}

For completeness, we include performance plots for each individual problem type considered in \cref{SubSec:Synth}.

\begin{figure}[ht]
    \centering
    \includegraphics[width=1.0\textwidth]{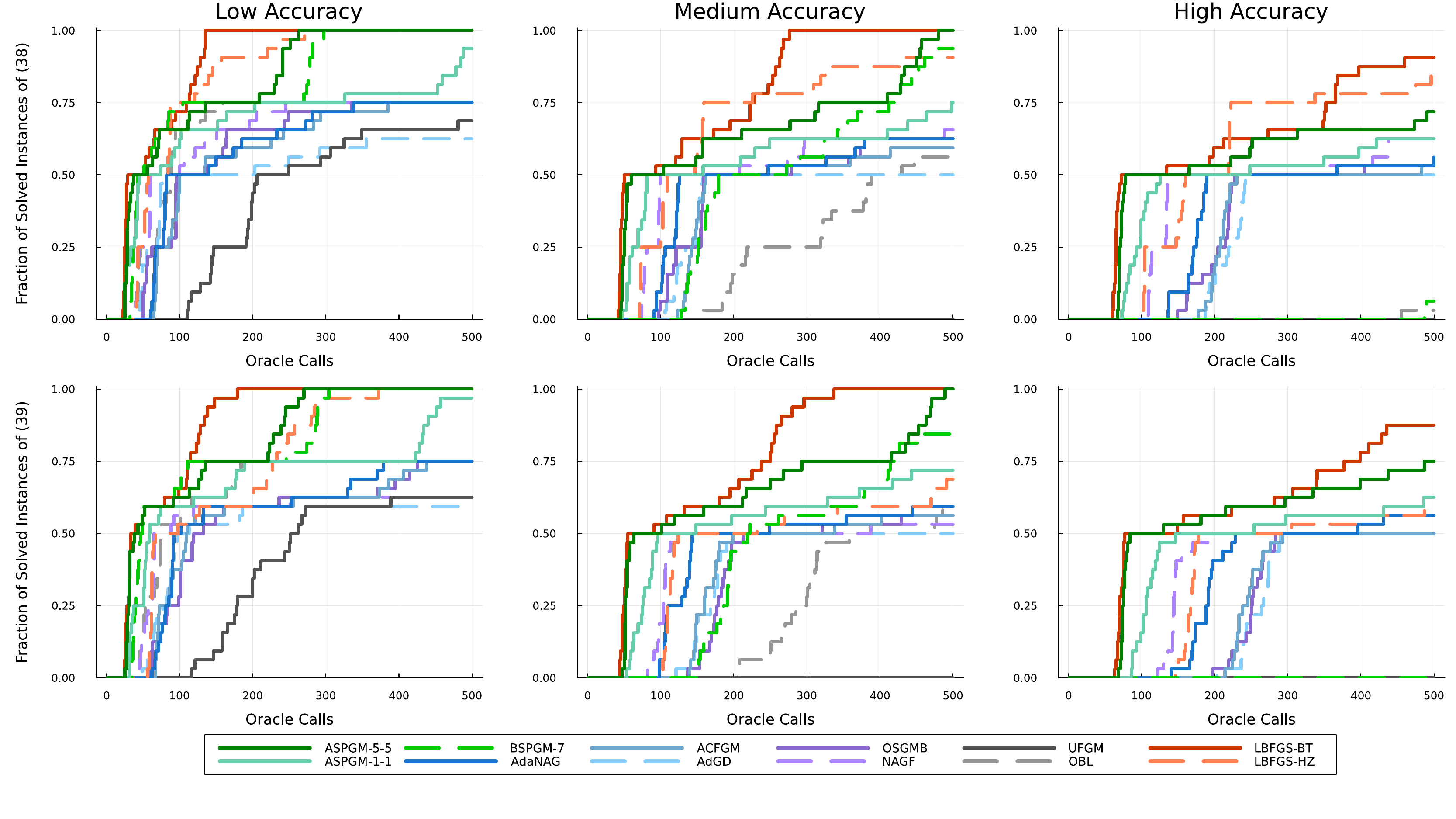}
    \caption{Performance comparison for least squares regression \eqref{Eqn:Objective_LSReg} and logistic regression \eqref{Eqn:Objective_LogReg}. The performance is measured by $(f(x_n) - f_\star)/(f(x_0) - f_\star)$ and the target accuracies from left to right are $10^{-4}, 10^{-7}, 10^{-10}$.}
\end{figure}

\begin{figure}
    \centering
    \includegraphics[width=1.0\textwidth]{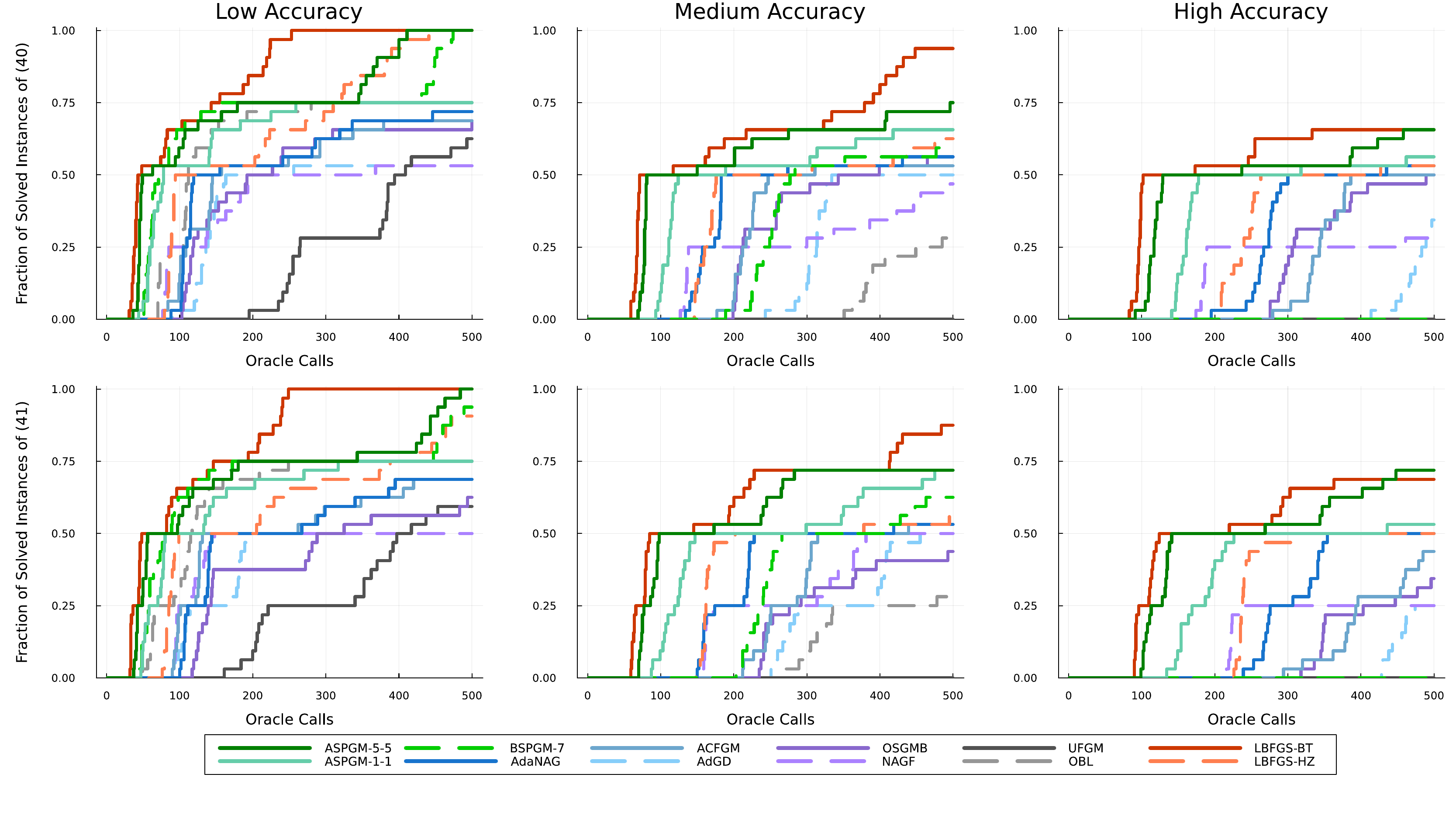}
    \caption{Performance comparison for smooth feasibility problems \eqref{Eqn:Objective_LogSumExp} and \eqref{Eqn:Objective_PosSquared}. The performance is measured by $(f(x_n) - f_\star)/(f(x_0) - f_\star)$ and the target accuracies from left to right are $10^{-4}, 10^{-7}, 10^{-10}$.}
\end{figure}

\begin{figure}
    \centering
    \includegraphics[width=1.0\textwidth]{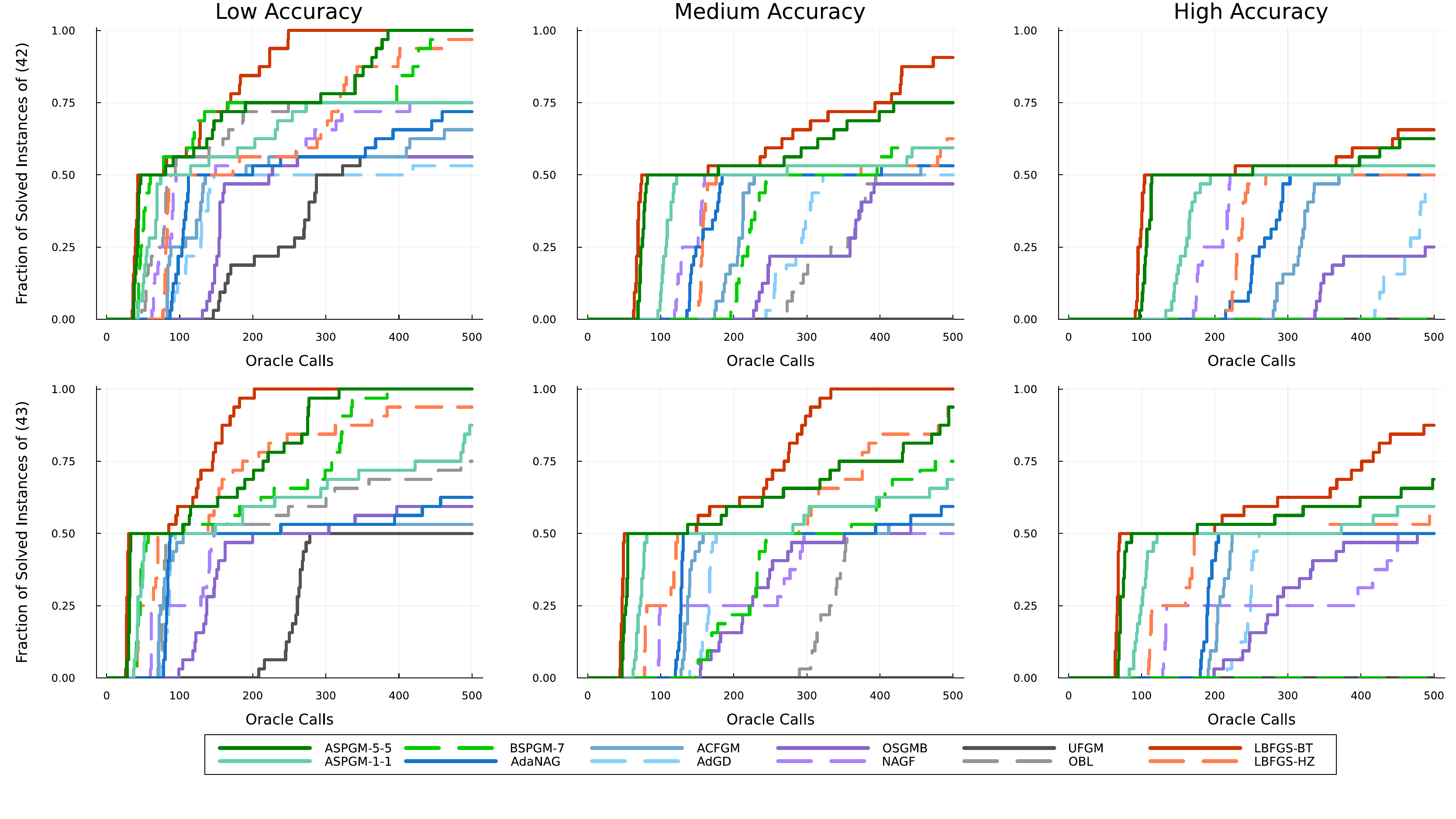}
    \caption{Performance comparison for locally smooth problems \eqref{Eqn:Objective_4Norm} and \eqref{Eqn:Objective_CubicReg}. The performance is measured by $(f(x_n) - f_\star)/(f(x_0) - f_\star)$ and the target accuracies from left to right are $10^{-4}, 10^{-7}, 10^{-10}$.}
\end{figure}

\end{document}